\newtheorem{proposition}{Proposition}
\newtheorem{theorem}[proposition]{Theorem}
\newtheorem{lemma}[proposition]{Lemma}
\theoremstyle{remark}
\newtheorem{remark}[proposition]{Remark}
\theoremstyle{definition}
\newtheorem{definition}[proposition]{Definition}
\numberwithin{equation}{section}
\numberwithin{proposition}{section}
\numberwithin{figure}{section}
\numberwithin{table}{section}
\newcommand{\N}{\mathbb{N}}
\newcommand{\R}{\mathbb{R}}
\newcommand{\E}{\mathbb{E}}
\renewcommand{\P}{\mathbb{P}}
\newcommand{\ep}{\varepsilon}
\renewcommand{\le}{\leqslant}
\renewcommand{\ge}{\geqslant}
\renewcommand{\subset}{\subseteq}
\newcommand{\la}{\left\langle}
\newcommand{\ra}{\right\rangle}
\newcommand{\Ll}{\left}
\newcommand{\Rr}{\right}
\renewcommand{\d}{\mathrm{d}}
\DeclareMathOperator{\tr}{tr}
\renewcommand{\bar}{\overline}
\newcommand{\td}{\widetilde}
\newcommand{\mcl}{\mathcal}
\newcommand{\msf}{\mathsf}
\newcommand{\al}{\alpha}
\newcommand{\de}{\delta}
\renewcommand{\t}{^\mathsf{t}}
\newcommand{\dr}{\partial}
\newcommand{\n}{\mathbf{n}}
\newcommand{\KK}{{\frac{K(K+1)}{2}}}
\renewcommand{\b}{\mathbf{b}}
\newcommand{\Err}{{\mathsf{Err}_N}}
\begin{document}

\author[J.-C. Mourrat]{J.-C. Mourrat}
\address[J.-C. Mourrat]{DMA, Ecole normale sup\'erieure,
CNRS, PSL University, Paris, France}
\email{mourrat@dma.ens.fr}

\keywords{spin glass, statistical inference, Hamilton-Jacobi equation}
\subjclass[2010]{82B44, 82D30}
\date{\today}

\title[Hamilton-Jacobi equations for finite-rank matrix inference]{Hamilton-Jacobi equations for finite-rank \\ matrix inference}

\begin{abstract}
We compute the large-scale limit of the free energy associated with the problem of inference of a finite-rank matrix. The method follows the principle put forward in \cite{HJinfer} which consists in identifying a suitable Hamilton-Jacobi equation satisfied by the limit free energy. We simplify the approach of \cite{HJinfer} using a notion of weak solution of the Hamilton-Jacobi equation which is more convenient to work with and is applicable whenever the non-linearity in the equation is convex.
\end{abstract}

\maketitle

%
%
%
%%%%%%%%%%%%%%%%%%%%%%%%%%%%
%%%%%%%%%%%%%%%%%%%%%%%%%%%%
%
%
%

\section{Introduction}

We fix an integer $K \in \{1,2,\ldots\}$ once and for all, and let $(\bar x_{1,k})_{1 \le k \le K}, \ldots, (\bar x_{N,k})_{1 \le k \le K}$ be $N$ independent and identically distributed random vectors taking values in $\R^K$. We denote the law of one of these vectors by $P$, and use the shorthand notation $P_N := P^{\otimes N}$ to denote their joint law. 
In the inference problem we consider, we observe the matrix
\begin{equation}  
\label{e.def.Y}
Y := \sqrt{\frac{t}{N}} \ \bar x \, \bar x\t + W \quad \in \R^{N\times N},
\end{equation}
where $t > 0$, $W = (W_{ij})_{1 \le i,j \le N}$ is an $N$-by-$N$ matrix of independent standard Gaussians, and $\bar x\t$ denotes the transpose of $\bar x \in \R^{N\times K}$. The matrix $W$ should be thought of as noise that perturbs the observation, and we aim to recover information about the rank-$K$ matrix $\bar x \, \bar x\t$ given the observation of $Y$. 

\smallskip

In order to understand this problem, it is of particular interest to study the conditional law of $\bar x$ given $Y$. This conditional law is the Gibbs measure associated with the quantity
\begin{equation}  
\label{e.def.HN}
H_N^\circ(t,x) := \sqrt{\frac t N} x \cdot W x  + \frac t N |x\t  \bar x|^2   - \frac{t}{2N} |x\t x|^2,
\end{equation}
where $x \in \R^{N\times K}$, and where for any two matrices $A$ and $B$ of the same size, we write 
\begin{equation}
\label{e.def.matrix.norm}
A \cdot B = \tr \Ll( A\t B \Rr) \quad \text{and} \quad |A| = (A \cdot A)^\frac 1 2 = \sqrt{\tr(A\t A)} .
\end{equation}
Denoting by $\P$ the joint law of $\bar x$ and $W$, this means that for any bounded measurable function $f : \R^{N\times K} \to \R$, we have
\begin{equation}  
\label{e.rel.H}
\E \Ll[ f(\bar x) \ | \ Y \Rr] =  \frac{\int_{\R^{N\times K}} f(x) \exp \Ll( H_N^\circ(t,x) \Rr) \d P_N(x)}{\int_{\R^{N\times K}}\exp \Ll( H_N^\circ(t,x) \Rr) \d P_N(x)}.
\end{equation}
As in problems of statistical mechanics, it is highly informative to understand the large-$N$ limit of the denominator in the expression above, which we may call the ``partition function''. Indeed, this quantity is essentially a moment-generating function. We aim to tackle this problem by proceeding in three steps: (1) we enrich the ``energy'' in \eqref{e.def.HN} by adding a simpler term where the quadratic interaction term $x\cdot W x$ is replaced by a linear term; (2) we find a relationship between the derivatives of the logarithm of the enriched partition function, up to error terms; (3)~we show that the effect of the error terms becomes negligible in the large-$N$ limit. We refer to the discussion of the Curie-Weiss model in \cite[Section~1]{HJinfer} for a more concrete illustration of this plan, and for further motivations.

\smallskip

As we enrich the energy in \eqref{e.def.HN}, it will be of fundamental importance for the analysis of the problem that we preserve the inference structure evidenced in \eqref{e.rel.H}. We thus define the enriched model indirectly by considering that we observe, in addition to $Y$ in \eqref{e.def.Y}, the quantity
\begin{equation}  
\label{e.obs.2}
Y' := \bar x \sqrt{h} + z \quad \in \R^{N\times K},
\end{equation}
where $z = (z_{i,k})_{1 \le i \le N, 1 \le k \le K}$ is an $N$-by-$K$ matrix of independent standard Gaussian entries, independent of $(\bar x,W)$, and $h$ is a fixed $K$-by-$K$ symmetric positive semidefinite matrix. The conditional law of $\bar x$ given the observation of $Y$ and $Y'$ is the Gibbs measure associated with the quantity defined, for every $x \in \R^{N\times K}$, by
\begin{equation*}  %\label{e.}
H_N(t,h,x) := H_N^\circ(t,x) +  \sqrt{h} \cdot x\t z + h\cdot x\t\bar x - \frac 1 2  h \cdot x\t x  .
\end{equation*}
The proof of this fact is recalled in the appendix. As explained above, our goal is to study the large-$N$ limit of the ``free energy''
\begin{equation}
\label{e.def.FN}
F_N(t,{h}) := \frac 1 N \log\Ll( \int_{\R^{N\times K}} e^{ H_N(t,{h},x) } \, \d P_N(x) \Rr) ,
\end{equation}
or of its expectation (with respect to the variables $\bar x$, $W$ and $z$)
\begin{equation}  
\label{e.def.barFN}
\bar F_N(t,{h}) := \E \Ll[ F_N(t,h) \Rr] .
\end{equation}
To state the main result, we introduce some definitions. First, notice that $\bar F_N(0,h)$ does not depend on $N$; we denote it by
\begin{equation*}  %\label{e.}
\psi(h) := \bar F_N(0,h).
\end{equation*}
We denote the set of $K$-by-$K$ symmetric matrices by $S^K$, and write $S^K_{+}$ and $S^K_{++}$ for the subsets of positive semidefinite and positive definite matrices respectively. For any open set $U \subset S^K$ and smooth function $f : U \to \R$, we define the gradient of~$f$, which we denote by $\nabla f :U \to S^K$, to be the unique mapping such that, for every $h \in U$ and $a \in S^K$, we have
\begin{equation*}  %\label{e.}
\lim_{\substack{\ep \to 0}} \ep^{-1} \Ll( f(h + \ep a) - f(h) \Rr) = \nabla f(h) \cdot a.
\end{equation*}
This gradient has a formal adjoint in $L^2(S^K)$, which we denote by $-\nabla \cdot$, and we set $\Delta := \nabla \cdot \nabla$ to denote the corresponding Laplacian. The set $S^K$ can be identified with~$\R^\KK$ after the choice of an orthonormal basis, and these differential operators on $S^K$ then match the standard definitions on $\R^\KK$.
We will most of the time encounter functions defined on $\R_+ \times S^K_+$, in which case the operators $\nabla$ and $\Delta$ are understood to act on the second variable only, keeping the first variable fixed, which we interpret as a ``time'' variable. Here is the main result of the paper.
%\begin{equation*}  %\label{e.}
%\msf H(\nabla f) := \sum_{1 \le k < l \le K} \Ll( \dr_{h_{kl}}f -  \dr_{h_{kk}}f  -  \dr_{h_{ll}}f \Rr) ^2 + 2\sum_{k = 1}^K \Ll( \dr_{h_{kk}} f \Rr) ^2.
%\end{equation*}
\begin{theorem}[convergence to HJ]
Let $f$ be the unique weak solution $f : \R_+ \times S^K_+ \to \R$ of the Hamilton-Jacobi equation 
\label{t.hj}
\begin{equation}  
\label{e.hj}
 \partial_t f - 2|\nabla f|^2   = 0 \quad  \text{in } \R_+ \times S^K_+
\end{equation}
with initial condition $f(0,h) = \psi(h)$. For every $M \ge 1$, there exists a constant $C < \infty$ such that for every $N \ge 2$ and $t \in [0,M]$, 
\begin{equation}  
\label{e.convergence}
\int_{|h| \le M} \Ll| \bar F_N -f \Rr|(t,h) \, \d h \le \frac {C}{\log N}.
\end{equation}
\end{theorem}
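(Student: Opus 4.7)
The plan is to follow the three-step program outlined in the introduction: derive an approximate Hamilton-Jacobi equation for $\bar F_N$, control the resulting error terms, and close the argument using a well-posedness theory for weak solutions of the convex HJ equation \eqref{e.hj}.

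For the first step, I would compute $\partial_t \bar F_N$ and $\nabla_h \bar F_N$ directly from the definition \eqref{e.def.FN}-\eqref{e.def.barFN}. Differentiating in $t$ produces the Gibbs average of $\partial_t H_N$, one term of which contains the quadratic disorder $\frac{1}{\sqrt{tN}} x \cdot W x$. A Gaussian integration by parts in $W$ turns this into a term of the form $\frac{t}{N}|x\t x|^2$, which then combines cleanly with the $-\frac{t}{2N}|x\t x|^2$ already present in $H_N^\circ$. Using the Nishimori identity (which holds because the whole model has an inference structure given by \eqref{e.rel.H} and its enriched analogue, and which is reproved in the paper's appendix), quantities of the form $\E\la\tfrac{1}{N} x\t \bar x\ra$ can be rewritten as expectations involving an independent Gibbs replica of $x$. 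The upshot, writing $Q := \frac1N x\t x'$ for the overlap between replicas, is an identity of the shape
\begin{equation*}
\partial_t \bar F_N(t,h) \;=\; 2\,\E\la Q \ra\cdot\E\la Q\ra \;+\; 2\,\E\bigl\la\,|Q- \E\la Q\ra|^2\bigr\ra \;+\; o(1),
\end{equation*}
while $\nabla\bar F_N(t,h) = \E\la Q\ra$ (modulo the identification of $S^K$ with its dual). Substituting gives
\begin{equation*}
\partial_t \bar F_N - 2|\nabla \bar F_N|^2 \;=\; 2\,\E\bigl\la\,|Q- \E\la Q\ra|^2\bigr\ra \;+\; o(1),
\end{equation*}
so $\bar F_N$ satisfies \eqref{e.hj} up to a nonnegative error measuring the fluctuations of the overlap around its quenched mean.

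The main obstacle is the second step: controlling this fluctuation term in an integrated sense over $h$ with the claimed $1/\log N$ rate. Here I would exploit the fact that $h\mapsto \bar F_N(t,h)$ is convex (immediate from the Gibbs-measure form of $F_N$ together with linearity in the observation data), and that $\nabla \bar F_N = \E\la Q\ra$. A standard derivative-comparison argument for convex functions bounds the fluctuation by a second-order difference quotient of $\bar F_N$ in $h$; integrating this quotient against a smooth bump and using the concentration of $F_N$ around $\bar F_N$ (Gaussian concentration of $W$ and $z$, plus a bound on the fluctuations of $\bar x$ via $P_N$) yields a bound on the spatial average of the error term by a negative power of the bump's scale, times a logarithmic factor coming from the concentration estimate. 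Optimizing the scale produces the $1/\log N$ rate; the convexity of $\bar F_N$ is what turns a pointwise fluctuation bound into a usable integrated one without paying powers of $N$.

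For the third step, I would invoke the weak-solution framework developed earlier in the paper: since the Hamiltonian $p\mapsto 2|p|^2$ is convex, the paper's notion of weak solution admits a uniqueness theorem together with a quantitative stability estimate of the type ``if $g$ is a bounded convex-in-$h$, Lipschitz function on $\R_+\times S^K_+$ that satisfies $\partial_t g - 2|\nabla g|^2 = e$ in the appropriate weak sense and agrees with $\psi$ at $t=0$, then $\int_{|h|\le M}|g-f|(t,h)\,\d h \le C\|e\|$.'' Applying this with $g = \bar F_N$ and $e$ the error term controlled in Step 2 (noting that the exact initial condition $\bar F_N(0,h) = \psi(h)$ removes any initial layer contribution) yields \eqref{e.convergence}. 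The required a priori regularity of $\bar F_N$ — boundedness on compacts, Lipschitz continuity uniform in $N$, and convexity in $h$ — follows from elementary differentiation of \eqref{e.def.FN} combined with Nishimori-based identities for the relevant moments.
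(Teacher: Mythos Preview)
Your outline captures the broad three-step architecture, but there are two genuine gaps that would make the argument fail as written.

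\smallskip

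\textbf{Convexity of $h\mapsto\bar F_N(t,h)$ is not available for $K>1$.} You claim this is ``immediate from the Gibbs-measure form of $F_N$ together with linearity in the observation data,'' but the dependence of $H_N$ on $h$ is not linear: the term $\sqrt{h}\cdot x\t z$ involves the matrix square root. In the rank-one case the problem disappears because the square root is scalar and one can change variables, but for general $K$ the paper explicitly leaves convexity open (see Remark~3.4). What is actually available, and what the paper uses, is only \emph{local semiconvexity}: $a\cdot\nabla(a\cdot\nabla\bar F_N)\ge -C|a|^2\,|h^{-1}|$, with a lower bound that degenerates as $h$ approaches $\partial S^K_+$. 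This degeneracy is exactly what produces the $\log N$ rate. In the stability argument (which mirrors the uniqueness proof of Proposition~2.3), the semiconvexity constant enters the Gronwall inequality as $C\delta^{-1}$ on $S^K_{+\delta}$, leading to a factor $\exp(C\delta^{-1})$; balancing this against the $N^{-\alpha}$ error forces $\delta\sim(\log N)^{-1}$. Your account of the $\log N$ as ``a logarithmic factor coming from the concentration estimate'' is not where it comes from.

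\smallskip

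\textbf{The antisymmetric part of the overlap is not addressed.} Your Step~2 treats the variance of $Q=\tfrac{1}{N}x\t x'$ as if it were a scalar. Differentiating $\bar F_N$ in $h\in S^K_+$ only probes $a\cdot x\t\bar x$ for \emph{symmetric} $a$, so the standard convexity/second-derivative argument controls only the symmetric part of the overlap matrix. Bounding the skew part $x\t\bar x-\bar x\t x$ is a separate and nontrivial step; the paper handles it (following \cite{bar19}) by deriving an alternative expression for the Hessian (equation~\eqref{e.expr.drh2}) and using the rank-one test directions $a=vv\t$ to bound $\mathsf{Skew}$ by $N^{7/4}(\Delta\bar F_N+C|h^{-1}|)^{1/4}$. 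Without this, the error term in your approximate HJ equation is not controlled and the argument does not close.

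\smallskip

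The remaining ingredients you list (Nishimori, Gaussian integration by parts, the stability/uniqueness mechanism for weak solutions, the exact initial condition) are all correct in spirit and match the paper's approach.
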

In \eqref{e.convergence}, the variable of integration is implicitly understood to range in $S^K_+$; the notation $\d h$ stands for the $\KK$-dimensional Lebesgue measure on this set. The convergence of $\bar F_N$ to $f$ in \eqref{e.convergence} can easily be improved to, say, convergence in~$L^{\infty}_{\mathrm{loc}}(\R_+\times S^K_+)$, using that the functions $\bar F_N$ are Lipschitz uniformly over $N$. Using the local semiconvexity of $\bar F_N$, see Definition~\ref{def.weaksol} and \eqref{e.conv.F} below, one can also obtain the convergence of the derivatives of~$\bar F_N$ to those of $f$ at every point of differentiability of~$f$. In particular, calculating the large-$N$ limit of $\partial_t \bar F_N(t,0)$ allows to identify the asymptotic minimum mean-square error of the original inference problem, see \cite{lm}.

\smallskip

I do not know if the rate of convergence in \eqref{e.convergence} is sharp. In the special rank-one case $K = 1$, the proof given below simplifies in several ways, most importantly in relation with Remark~\ref{r.convex} below, and yields an algebraic instead of logarithmic rate of convergence. 

\smallskip

As the proof reveals, the constant $C$ in \eqref{e.convergence} can be chosen to be a power of the rank~$K$. The result thus allows to let $K$ diverge slowly with $N$. Also, the independence assumption on the raws of $\bar x$ can be relaxed: what we really need is that $\bar F_N(0,\cdot)$ converges in $L^1_{\mathrm{loc}}(S^K_+)$. Finally, the Gaussian assumption on the noise~$W$ can be relaxed as well using \cite[Lemma~4]{ch06}.

\smallskip

Although it appears there in a different formulation, the qualitative convergence of $\bar F_N$ to $f$ was already proved in \cite{lm}. Besides the fact that Theorem~\ref{t.hj} gives a quantitative estimate, the main contribution of the present paper is to provide an alternative proof of this result, which I view as simpler and more ``conceptual'' than the original proof. The argument is simple enough that obtaining a rate of convergence essentially comes without additional effort. The driving idea is similar to that in~\cite{HJinfer}, in that we first show that $\bar F_N$ satisfies the Hamilton-Jacobi equation~\eqref{e.hj} approximately, and then pass to the limit. However, compared with~\cite{HJinfer}, one important difference is that we use here a notion of weak solution of the Hamilton-Jacobi equation which differs from the notion of viscosity solution used in~\cite{HJinfer}. The precise definition of weak solution we rely on is given below in Section~\ref{s.weak}. We also explain there why this notion of solution is more adapted to the purpose of proving Theorem~\ref{t.hj}. 

\smallskip

The primary ingredient for proving Theorem~\ref{t.hj} is the following result. We denote the condition number of a matrix $h \in S^K_+$ by
\begin{equation}
\label{e.def.kappa}
\kappa(h) := 
\Ll|
\begin{array}{ll}
|h| \, \Ll|h^{-1}\Rr| & \text{if } h \in S^K_{++},
\\
+\infty & \text{otherwise}.
\end{array}
\Rr.
\end{equation}
\begin{proposition}[approximate HJ in finite volume]
There exists $C < \infty$ (which depends only on $K$ and on bounds on the support of $P$) such that for every $N \ge 1$ and uniformly over $\R_+ \times S^K_+$,
\label{p.approx.hj}
\begin{equation}  
\label{e.approx.hj}
0 \le \partial_t \bar F_N - 2|\nabla \bar F_N|^2 \le C \kappa(h) N^{ - \frac 1 4 } \Ll( \Delta \bar F_N + C \Ll| h^{-1} \Rr|  \Rr) ^\frac 1 4 +C \E \Ll[ \Ll| \nabla F_N - \nabla \bar F_N \Rr| ^2 \Rr].
\end{equation}
\end{proposition}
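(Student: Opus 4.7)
The plan is to express $\partial_t\bar F_N - 2|\nabla\bar F_N|^2$ exactly as a variance of the overlap matrix $R := x^\top\bar x/N$ and then to bound this variance using a Hessian estimate followed by an interpolation. The core identities one needs are
\begin{equation*}
\partial_t\bar F_N = \tfrac12\E\langle|R|^2\rangle \qquad\text{and}\qquad \nabla\bar F_N = \tfrac12\E\langle R\rangle,
\end{equation*}
both obtained via Gaussian integration by parts combined with the Nishimori identity. For $\partial_t\bar F_N$, an IBP in $W$ applied to the $x\cdot Wx$ term in $\partial_t H_N$ produces $\langle|x^\top x|^2\rangle - \langle|x^{(1)\top}x^{(2)}|^2\rangle$; combined with the explicit polynomial terms in $\partial_t H_N^\circ$ and Nishimori (to identify the two-replica overlap with the planted overlap in expectation), this yields the claimed formula. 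For $\nabla\bar F_N$, the key technical ingredient is $A_a := D_h\sqrt h[a]$, the symmetric solution of the Sylvester equation $A_a\sqrt h + \sqrt h A_a = a$; after IBP in $z$ applied to $\langle A_a\cdot x^\top z\rangle$ and a Nishimori-based replica-label symmetrization, the trace identity $\tr(A_a X\sqrt h) = \tfrac12\,a\cdot X$ valid for symmetric $X$ (a direct consequence of the Sylvester equation) makes all auxiliary terms cancel. The lower bound then follows at once from Jensen's inequality:
\begin{equation*}
\partial_t\bar F_N - 2|\nabla\bar F_N|^2 = \tfrac12\bigl(\E\langle|R|^2\rangle - |\E\langle R\rangle|^2\bigr) = \tfrac12\E\langle|R - \E\langle R\rangle|^2\rangle \ge 0.
\end{equation*}

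For the upper bound, I would decompose
\begin{equation*}
\E\langle|R - \E\langle R\rangle|^2\rangle = \E\langle|R - \langle R\rangle|^2\rangle + \E\bigl|\langle R\rangle - \E\langle R\rangle\bigr|^2
\end{equation*}
into thermal and quenched parts. The pointwise (un-averaged) version of the gradient identity reads $\langle R\rangle = 2\nabla F_N + E$, where $E=E(W,z,\bar x)$ is an explicit random residual built from $\langle A_a\cdot x^\top z\rangle$ and $\langle x^\top x\rangle$; standard Gaussian concentration applied to $E$ (using boundedness of $\supp P$) shows that its centered version has variance $O(1/N)$, so the quenched part is at most $C\,\E|\nabla F_N - \nabla\bar F_N|^2 + O(1/N)$, which furnishes the final term in \eqref{e.approx.hj}. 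For the thermal part, I would rely on the variance formula $\partial_a^2 F_N = \tfrac1N\langle\partial_a^2 H_N\rangle + \tfrac1N\,\var_{\mathrm{Gibbs}}(\partial_a H_N)$ summed over an orthonormal basis of $S^K$: the Gibbs-variance contribution is $N\,\langle|R - \langle R\rangle|^2\rangle$ at leading order, while the $\langle\partial_a^2 H_N\rangle$ piece involves $\Delta_h\sqrt h$ and, after a further $z$-IBP followed by expectation, is controlled by $C|h^{-1}|$. This yields a raw Hessian bound of the form
\begin{equation*}
\E\langle|R - \langle R\rangle|^2\rangle \le \frac{C\,\kappa(h)^4}{N}\bigl(\Delta\bar F_N + C|h^{-1}|\bigr),
\end{equation*}
where the $\kappa(h)^4$ factor accumulates the $|h^{-1/2}|$-type norms of the Sylvester operators $A_a$ appearing throughout the IBP computation.

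This Hessian bound is sharp when $h$ is well-conditioned but degenerates as $h$ approaches the boundary of $S^K_+$, so the final step will be to interpolate it at exponent $\tfrac14$ against the crude a priori estimate $\E\langle|R-\langle R\rangle|^2\rangle \le C$ (coming from the boundedness of $x$ and $\bar x$), which gives
\begin{equation*}
\E\langle|R - \langle R\rangle|^2\rangle \le C\,\kappa(h)\,N^{-1/4}\bigl(\Delta\bar F_N + C|h^{-1}|\bigr)^{1/4}.
\end{equation*}
Combined with the quenched bound from the previous paragraph this is precisely \eqref{e.approx.hj}. The main difficulty I anticipate is the careful bookkeeping of the $\sqrt h$-derivatives through the Sylvester equation, which is responsible simultaneously for the additive $|h^{-1}|$-correction inside the parentheses and the multiplicative $\kappa(h)$ factor; in the rank-one case $K=1$ the operator $\sqrt h$ degenerates to a scalar square root and all these difficulties disappear, matching the author's remark that the proof simplifies significantly and yields an algebraic rate in that setting.
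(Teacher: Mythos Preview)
Your starting point is correct: the identity
\[
\partial_t \bar F_N - 2|\nabla \bar F_N|^2 = \tfrac{1}{2}\,\E\la |R - \E\la R\ra|^2\ra, \qquad R := N^{-1} x^\top \bar x,
\]
matches the paper (Lemma~\ref{l.first.approx.hj}), and the lower bound is immediate. The gap is in the upper bound.

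\textbf{The skewness issue.} Your claim that the Gibbs-variance term in the second-derivative formula equals $N\la|R-\la R\ra|^2\ra$ ``at leading order'' is where the argument breaks. The field $h$ ranges over \emph{symmetric} matrices, so $\nabla\bar F_N$, $\nabla F_N$ and $\Delta\bar F_N$ only probe $a\cdot R$ for $a\in S^K$, i.e.\ the symmetric part of $R$. But $R=x^\top\bar x/N$ is not symmetric, and $\E\la|R-\E\la R\ra|^2\ra$ contains the variance of its antisymmetric part. Concretely, after Gaussian integration by parts the thermal variance of $\partial_a H_N = H_N'(a,h,x)$ reduces, via the Sylvester identity $D_{\sqrt h}(a)\sqrt h\cdot b=\tfrac12 a\cdot b$ \emph{which holds only for symmetric $b$}, to expressions in $a\cdot x^\top x'$. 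When $b$ is not symmetric one picks up an error of size $\kappa(h)^{1/2}|a|\,|b-b^\top|$ (Lemma~\ref{l.symmetrize}), and these errors accumulate into the quantity
\[
\msf{Skew} := N\bigl(\E\la|x^\top\bar x - \bar x^\top x|^2\ra\bigr)^{1/2}.
\]
Thus the thermal-Hessian bound you state as $\E\la|R-\la R\ra|^2\ra \le C\kappa(h)^4 N^{-1}(\Delta\bar F_N + C|h^{-1}|)$ is not established, and I do not see how to get it: the right-hand side carries no information about the antisymmetric part of $R$.

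\textbf{Where the $1/4$ actually comes from.} In the paper the exponent $\tfrac14$ is \emph{not} an interpolation with a trivial bound. It arises in the control of $\msf{Skew}$ itself (Steps~3--4 of the proof, following~\cite{bar19}): one first derives the exact identity
\[
a\cdot\nabla(a\cdot\nabla\bar F_N)=\tfrac{1}{2N}\Bigl(\E\la(a\cdot x^\top\bar x)^2\ra - 2\E\la(a\cdot x^\top\bar x)(a\cdot x^\top x')\ra + \E\bigl[\la a\cdot x^\top x'\ra^2\bigr]\Bigr),
\]
then specializes to $a=vv^\top$ and uses the factorization $(vw^\top\cdot x^\top x')^2 = (xvv^\top x^\top)\cdot(x'ww^\top(x')^\top)$ together with Cauchy--Schwarz to split the $v$ and $w$ dependences. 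This chain of square roots is what produces
\[
\msf{Skew}\le C N^{7/4}\bigl(\Delta\bar F_N + C|h^{-1}|\bigr)^{1/4}.
\]
The final bound follows by combining this with the ``symmetric'' estimate (Steps~1--2), which controls $\E\la(a\cdot x^\top\bar x - \E\la a\cdot x^\top\bar x\ra)^2\ra$ by $4\E\la(H_N'-\E\la H_N'\ra)^2\ra + C|a|^2\kappa(h)\,\msf{Skew}$ and then by $N\Delta\bar F_N$, $N^2\E|\nabla F_N-\nabla\bar F_N|^2$, $N|h^{-1}|$ and the Skew term.

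\textbf{Minor point.} It is cleaner (and avoids your residual $E$) to decompose the variance of $H_N'(a,h,x)$ rather than of $R$ into thermal and quenched parts: the quenched part of $H_N'$ is \emph{exactly} $N^2\E[(a\cdot\nabla F_N - a\cdot\nabla\bar F_N)^2]$, with no leftover. Your closing comment also slightly misidentifies the simplification at $K=1$: what disappears is the skewness issue (a $1\times 1$ matrix is symmetric), not primarily the $\sqrt h$-calculus.
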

In \eqref{e.approx.hj}, we understand that the notation $\Ll|h^{-1}\Rr|$ stands for the mapping $(t,h) \mapsto \Ll|h^{-1}\Rr|$, and that the right side is infinite whenever $h \in S^K_+ \setminus S^K_{++}$.
%In particular, the right side of \eqref{e.approx.hj} is infinite on $\R_+ \times \dr S^K_+$. 
In \eqref{e.approx.hj} and throughout the paper, whenever a statement of the form ``$\mcl P(x^\alpha)$ holds'' appears, with $\alpha \in (0,1)$, the statement should be understood as ``$x \ge 0$ and $\mcl P(x^\alpha)$ holds''. For instance, the fact that $\Delta \bar F_N + C \Ll| h^{-1} \Rr|  \ge 0$ is implied by \eqref{e.approx.hj}.

\smallskip

In order to prove Proposition~\ref{p.approx.hj}, one needs to find an upper bound on the variance of the $K$-by-$K$ matrix~$x\t \bar x$, see Lemma~\ref{l.first.approx.hj} below. Compared with the rank-one case, an additional difficulty appears, since the generalization of the rank-one argument only gives information about the symmetric part of the matrix~$x\t \bar x$. For general spin glass problems, this difficulty was resolved in \cite{pan.potts,pan.vec} using relatively involved combinatorial arguments of Ramsey type. In the simpler setting of inference problems, a more direct approach was discovered in \cite{bar19}, and we will essentially follow the same line of reasoning here.

\smallskip

Previous works on the problem, be it rank-one or more general, include \cite{dm,lkz,bar1,lm, bm1,bmm,eak}. We refer to \cite{lm} for a precise descprition of these results. 

\smallskip

The rest of the paper is organized as follows. In Section~\ref{s.weak}, we define the notion of weak solution of \eqref{e.hj} and show well-posedness of this equation. We next prove Proposition~\ref{p.approx.hj} in Section~\ref{s.approx.hj}, and Theorem~\ref{t.hj} in Section~\ref{s.convergence}. In order to make the paper self-contained, we provide a proof of \eqref{e.rel.H} and its generalization to the enriched model in an appendix.

%\begin{corollary}[convergence of derivatives]
%\label{c.deriv}
%(1) Fix $h \in S^K_+$, and set
%\begin{equation*}  %\label{e.}
%\mcl B_h := \Ll\{ t > 0 \ : \ t'\mapsto f(t',h) \mbox{ is not differentiable at $t$} \Rr\}.
%\end{equation*}
%The set $\mcl B_h$ is countable, and moreover, for every $t \in (0,\infty) \setminus \mcl B_h$, we have
%\begin{equation*}  %\label{e.}
%\lim_{N\to \infty} \dr_t \bar F_N(t,h) = \dr_t f(t,h).
%\end{equation*}

%(2) Fix $t \ge 0$, and set
%\begin{equation*}  %\label{e.}
%\mcl B_t' := \Ll\{ h \in S^K_{++} \ : \ h' \mapsto f(t,h') \mbox{ is not differentiable at $h$} \Rr\}.
%\end{equation*}
%The set $\mcl B_t'$ has null Lebesgue measure, and moreover, for every $h \in S^K_{++} \setminus \mcl B_t'$, we have
%\begin{equation*}  %\label{e.}
%\lim_{N \to \infty} \nabla \bar F_N(t,h) = \nabla f(t,h).
%\end{equation*}
%\end{corollary}

%
%
%
%%%%%%%%%%%%%%%%%%%%%%%%%%%%
%%%%%%%%%%%%%%%%%%%%%%%%%%%%
%
%
%

\section{Weak solutions of Hamiton-Jacobi equations}
\label{s.weak}

In this section, we define precisely the notion of weak solution appearing in Theorem~\ref{t.hj}, and prove the well-posedness of the Hamilton-Jacobi equation \eqref{e.hj}. We also discuss, in relation with the specific features of our problem, the advantages of this notion compared with that of viscosity solution. 

\subsection{Definition of weak solution}

In order to make the structure of the equation more salient, we give ourselves a function $\msf H \in C(S^K_+; \R)$, and consider equations of the form
\begin{equation}
\label{e.gen.hj}
\dr_t f - \msf H(\nabla f) = 0 \qquad \text{in } \R_+ \times S^K_+.
\end{equation} 
We will always assume that the function $\msf H$ is \emph{convex}. In view of the statement of Theorem~\ref{t.hj}, we are mostly interested in the case when $\msf H(p) = 2|p|^2$. In order to state the definition of weak solution of \eqref{e.gen.hj}, we introduce, for every $\de >0$, 
\begin{equation}
\label{e.def.Skde}
S^K_{+\de} := \de \, \mathrm{I}_K + S^K_+,
\end{equation}
where $\mathrm{I}_K$ denotes the $K$-by-$K$ identity matrix. In words, the set $S^K_{+\de}$ is the set of symmetric matrices with spectrum in $[\de,+\infty)$. 
\begin{definition}  
\label{def.weaksol}
We say that a Lipschitz function $f : \R_+ \times S^K_+\to \R$ is a \emph{weak solution} of \eqref{e.gen.hj} if the following conditions hold:
\begin{itemize}  %\label{}
\item
the relation \eqref{e.gen.hj} holds almost everywhere in $\R_+ \times S^K_+$;
\item 
For every $t \ge 0$, 
the mapping $h \mapsto f(t,h)$ is nondecreasing.
\item
For every $\de \in (0,1]$, there exists $C_\de < \infty$ such that for every $t \in [\de,\de^{-1}]$, the mapping $h \mapsto f(t,h) + C_\de|h|^2$ is convex on the set $\{h \in S^K_{+\de} \ : \ |h|\le \de^{-1}\}$.
\end{itemize}
\end{definition}
We refer to the second and third conditions in Definition~\ref{def.weaksol} as the \emph{monotonicity} and \emph{local semiconvexity} conditions, respectively. Let us clarify the meaning of the monotonicity condition. For every $A,B \in S^K$, we write $A \le B$ if and only if $B-A \in S^K_+$. This defines a partial order on~$S^K$. We say that a function $g : S^K_+ \to \R$ is nondecreasing if, for every $h,h' \in S^K_+$, we have
\begin{equation}  
\label{e.monotone}
h \le h' \quad \implies \quad \msf H(h) \le \msf H(h').
\end{equation}

\smallskip

Recall that by the Rademacher theorem, a Lipschitz function is differentiable almost everywhere. In view of the second part of the following elementary lemma, if $f$ is Lipschitz and satisfies the monotonicity condition, it then makes sense to ask about the measure of the set of points where \eqref{e.gen.hj} holds. 

\begin{lemma}
\label{l.grad}
(1) Let $a \in S^K$. We have
\begin{equation}
\label{e.equiv.pos}
a \in S^K_+ \quad \iff \quad \forall b \in S^K_+, \ a \cdot b \ge 0.
\end{equation}

(2) Let $f : S^K_+ \to \R$ be a Lipschitz function. The function $f$ is nondecreasing if and only if, for almost every $a \in S^K_+$, we have $\nabla f(a) \in S^K_+$.
\end{lemma}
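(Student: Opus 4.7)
\emph{Part (1).} The plan is to prove the forward implication by writing, for $a, b \in S^K_+$, $a \cdot b = \tr(ab) = \tr(\sqrt{b}\,a\,\sqrt{b})$ via the symmetric square root $\sqrt{b}$ and cyclicity of the trace, and observing that $\sqrt{b}\,a\,\sqrt{b} \in S^K_+$ so its trace is nonnegative. For the converse, I would specialize to rank-one test matrices $b = vv\t$ with $v \in \R^K$; then $a \cdot vv\t = v\t a v$, so the hypothesis forces $v\t a v \ge 0$ for every $v$, hence $a \in S^K_+$.

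\emph{Part (2), forward direction.} Assuming $f$ is nondecreasing, for any $a \in S^K_+$ at which $f$ is differentiable and any $b \in S^K_+$, monotonicity gives $\ep^{-1}(f(a+\ep b) - f(a)) \ge 0$ since $a + \ep b \ge a$. Sending $\ep \to 0^+$ yields $\nabla f(a) \cdot b \ge 0$ for every $b \in S^K_+$, and part (1) then forces $\nabla f(a) \in S^K_+$.

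\emph{Part (2), reverse direction.} Given $a, a' \in S^K_+$ with $a \le a'$, the goal is to show $f(a') \ge f(a)$. The difficulty is that the segment $\{a + s(a' - a) : s \in [0,1]\}$ has zero Lebesgue measure in $S^K$, so the a.e.\ hypothesis on $\nabla f$ cannot be integrated along it directly. The first step is to reduce to $a, a' \in S^K_{++}$ by perturbing to $a + \eta \mathrm{I}_K$, $a' + \eta \mathrm{I}_K$ and invoking Lipschitz continuity. Next, I would pick a small ball $B \subset S^K$ centered at the origin such that every translated segment $\{a + c + s(a'-a) : s \in [0,1]\}$ stays in $S^K_{++}$ for $c \in B$. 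By Fubini applied to the null set where $\nabla f$ either fails to exist or fails to lie in $S^K_+$, for almost every $c \in B$ the map $s \mapsto f(a + c + s(a' - a))$ is absolutely continuous with derivative $\nabla f(a + c + s(a' - a)) \cdot (a' - a) \ge 0$ for a.e. $s$, using part (1) applied to $\nabla f \in S^K_+$ and $a' - a \in S^K_+$. Integrating over $s$ gives $f(a' + c) \ge f(a + c)$ on a full-measure subset of $B$, and averaging over such $c$ with $B$ shrinking to $\{0\}$, combined with continuity of $f$, recovers $f(a') \ge f(a)$. The main obstacle is precisely this translation-and-Fubini device to bypass the measure-zero issue on the segment; all other steps reduce to routine matrix algebra and one-sided differentiation.
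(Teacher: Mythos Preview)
Your proposal is correct. Parts (1) and the forward half of (2) are essentially the paper's argument, with cosmetic variations: for the converse in (1) you test against rank-one matrices $vv\t$, while the paper diagonalizes $a$ and tests against diagonal $b$'s; both are standard and equivalent.

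The genuine difference is in the reverse direction of (2). The paper's strategy is to mollify: convolve $f$ with a nonnegative kernel $\zeta_\ep$ to obtain smooth approximations $f_\ep$ on $S^K_{+\ep}$, observe that $\nabla f_\ep(h) = \int \nabla f(h-h')\zeta_\ep(h')\,\d h' \in S^K_+$ since $S^K_+$ is a closed convex cone, integrate $\nabla f_\ep$ along the segment $[a,a']$ to get $f_\ep(a') \ge f_\ep(a)$, and then pass to the limit $\ep \to 0$. Your translate-and-Fubini device achieves the same end without smoothing: you slice the null set by lines in the direction $a'-a$, use absolute continuity of the Lipschitz restriction on each good line, and recover the inequality by continuity. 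The mollification route is more concise once the smooth case is granted, and it reuses machinery the paper needs later anyway; your approach is self-contained and makes the role of the a.e.\ hypothesis more transparent, at the cost of a slightly longer measure-theoretic discussion.
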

\begin{proof}
If $a, b \in S^K_+$, then $a\cdot b = |\sqrt{a}\sqrt{b} |^2 \ge 0$, and thus the direct implication in~\eqref{e.equiv.pos} holds. For the converse implication, without loss of generality, we can assume that $a$ is a diagonal matrix, in which case the result is easily derived by considering diagonal matrices for $b$. For part (2), by approximation (see e.g.\ \eqref{e.def.f.eps} and \eqref{e.ae.lim} below), we can assume that the function $f$ is smooth. For every $\ep > 0$ and $a,b \in S^K_+$, we have
\begin{equation*}  %\label{e.}
0 \le \ep^{-1}(f(a + \ep b) - f(a)),
\end{equation*}
and the right side tends to $\nabla f(a)\cdot b$ as $\ep$ tends to $0$. 
Using also \eqref{e.equiv.pos}, we obtain the direct implication in part (2). The converse statement follows by writing, for every $a \le b \in S^K_+$,
\begin{equation*}  %\label{e.}
f(b) - f(a) = \int_0^1 (b-a) \cdot \nabla f(a + s(b-a)) \, \d s,
\end{equation*}
and using again \eqref{e.equiv.pos} to conclude.
\end{proof}

The monotonicity condition in Definition~\ref{def.weaksol} is only really used in a neighborhood of the set $\R_+\times \Ll(S^K_{+} \setminus S^K_{++}\Rr)$, and plays the role of a one-sided boundary condition of Neumann type. We will combine this with the additional assumption that $\msf H$ is nondecreasing to obtain the uniqueness of solutions. In the case of domains without boundary, say $h \in S^K$, then both conditions can be dropped (assuming then that we are given a convex $\msf H$ defined on the entirety of $S^K$, not just on $S^K_+$). Moreover, as will be seen in the proof of uniqueness of solutions given below, these conditions can be weakened significantly. Roughly speaking, we need that $\nabla \msf H(\nabla f) \cdot \mathbf{n}_{S^K_+} \le 0$ almost everywhere on $\dr S^K_+$, where $\mathbf{n}_{S^K_+}$ is the unit outer normal to~$S^K_+$ (seen as a subset of $S^K$ with Lipschitz boundary).

\smallskip

While this monotonicity condition on weak solutions can be weakened significantly, it cannot be dropped altogether without loosing the uniqueness of weak solutions. For an example with $K = 1$, for any $p \in \R_+$, the function $(t,h) \mapsto (\msf H(-p) t - ph)_+$ satisfies \eqref{e.gen.hj} almost everywhere, is convex in $h$, and is constant equal to $0$ at $t = 0$. Similarly, the local semiconvexity condition cannot be dropped without loosing the uniqueness of weak solutions. 

\subsection{Well-posedness and comparison with viscosity-solution approach}
The next proposition shows that the notion of weak solution introduced in Definition~\ref{def.weaksol} indeed ensures the uniqueness of solutions. The proof essentially follows the classical approach of \cite{dou65,kru66,kru67}, see also \cite{benton} and \cite[Theorem~I.3.3.7]{evans}, that we adapt to our particular setting (in particular in relation with the boundary condition).

\begin{proposition}[uniqueness of weak solutions]
\label{p.uniqueness}
Let $\msf H \in C^2(S^K_+,\R)$ be convex and nondecreasing. If $f$ and $g$ are two weak solutions to \eqref{e.gen.hj} such that $f(0,\cdot) = g(0,\cdot)$, then $f = g$.
\end{proposition}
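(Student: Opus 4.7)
The plan is to follow the classical Kruzhkov–Benton uniqueness strategy for convex Hamilton-Jacobi equations, using the monotonicity and local semiconvexity conditions of Definition~\ref{def.weaksol} to handle the non-smooth boundary of $S^K_+$.

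The starting point is the convexity inequality: at every point where $f$ and $g$ are differentiable and satisfy \eqref{e.gen.hj} (which is almost everywhere), setting $w := f - g$,
\begin{equation*}
\partial_t w = \msf H(\nabla f) - \msf H(\nabla g) \le \nabla \msf H(\nabla f) \cdot \nabla w.
\end{equation*}
The crucial point is that the drift $b := \nabla \msf H(\nabla f)$ lies in $S^K_+$ almost everywhere: by the monotonicity condition and Lemma~\ref{l.grad}, $\nabla f \in S^K_+$, and since $\msf H$ is nondecreasing, $\nabla \msf H \in S^K_+$ as well (again via Lemma~\ref{l.grad}). Any outer normal to $\dr S^K_+$ lies in $-S^K_+$, so $b \cdot \mathbf{n} \le 0$ along the boundary, meaning $\dr S^K_+$ behaves as an outflow boundary requiring no additional condition.

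I would then regularize $f$ and $g$ by convolution with a smooth non-negative mollifier, use the local semiconvexity to ensure that the mollified gradients converge pointwise almost everywhere and that the convexity inequality passes to the mollified versions, and derive $\partial_t w_+ \le b \cdot \nabla w_+$ via the chain rule for $w_+ := \max(w, 0)$. For any non-negative test function $\phi \in C_c^\infty$ whose support lies in $\R_+ \times S^K_{++}$ and satisfies the backward-cone-of-dependence inequality $\partial_t \phi + L|\nabla \phi| \le 0$ with $L$ bounding $|b|$, multiplying by $\phi$ and integrating by parts in both space and time yields
\begin{equation*}
\int_{S^K_+} w_+(T, h)\,\phi(T, h)\,\d h \le C \int_0^T \! \int_{S^K_+} w_+(t, h)\,\phi(t, h)\,\d h\,\d t,
\end{equation*}
where the key input is that $\nabla \cdot b = D^2 \msf H(\nabla f) : D^2 f$ is bounded below (so $-\nabla\cdot b \le C$) thanks to convexity of $\msf H$ and local semiconvexity of $f$. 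Gronwall combined with $w_+(0,\cdot)=0$ then forces $w_+ \equiv 0$ on the support of $\phi$; varying $\phi$ gives $f \le g$ throughout $\R_+ \times S^K_{++}$ (and in $\R_+ \times S^K_+$ by continuity), and the reverse inequality follows by symmetry.

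The main obstacle is reconciling the low regularity of the solutions (Lipschitz with only one-sided semiconvexity) with the non-smooth geometry of $\dr S^K_+$, which has corners along the stratification by matrix rank. The two structural conditions of Definition~\ref{def.weaksol} beyond the almost-everywhere PDE are tailored precisely to this: monotonicity supplies the one-sided condition $b\cdot \mathbf{n}\le 0$, which removes the need for any boundary data, while local semiconvexity supplies the lower bound on $\nabla\cdot b$ needed to close the Gronwall estimate and also guarantees strong enough convergence of the mollified gradients for the convexity inequality to pass to the limit. Tracking the mollification errors carefully, and designing a family of test functions whose supports exhaust $\R_+ \times S^K_{++}$, are the primary technical chores.
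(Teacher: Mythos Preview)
Your proposal is correct and follows essentially the same Kruzhkov--Benton strategy as the paper's proof: mollify to make $\nabla\cdot b$ meaningful, use convexity of $\msf H$ together with local semiconvexity of the solutions to bound $\nabla\cdot b$ from below, and use monotonicity (hence $b\in S^K_+$) to give the boundary term on $\partial S^K_+$ the right sign, closing a Gronwall inequality. The only differences are cosmetic: you invoke the one-sided convexity bound $\partial_t w \le \nabla\msf H(\nabla f)\cdot\nabla w$ and treat each sign of $w$ separately, whereas the paper writes the exact transport equation $\partial_t w = \b\cdot\nabla w$ with $\b=\int_0^1 \nabla\msf H(a\nabla f+(1-a)\nabla g)\,da$ and handles both at once; and you phrase the propagation estimate via compactly supported test functions with a cone condition, whereas the paper integrates directly over explicit shrinking truncated cones $B_\de(t)\subset S^K_{+\de}$ and tracks the two boundary pieces $\partial_+ B_\de(t)$ and $\partial_0 B_\de(t)$.
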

\begin{proof}
We decompose the proof into three steps. 

\smallskip

\emph{Step 1.} We identify an equation satisfied by the difference $w := f - g$. The following identities hold almost everywhere in $\R_+\times S^K_+$:
\begin{align*}  %\label{e.}
\dr_t w 
& = \dr_t f - \dr_t g
\\
& = 
\msf H(\nabla f) - \msf H(\nabla g)
\\
& = 
\int_0^1 \dr_a \Ll( \msf H(a\nabla f + (1-a)\nabla g) \Rr) \, \d a
\\
& = \int_0^1 \nabla w \cdot \nabla \msf H(a\nabla f + (1-a)\nabla g)  \, \d a.
\end{align*}
Setting 
\begin{equation*}  %\label{e.}
\b := \int_0^1 \nabla \msf H(a\nabla f + (1-a)\nabla g)  \, \d a,
\end{equation*}
we thus have that 
\begin{equation}  
\label{e.eq.for.w}
\dr_t w - \b \cdot \nabla w = 0 \quad \text{a.e. in } \R_+\times S^K_+.
\end{equation}
Let $\phi \in C^\infty(\R)$ be a nonnegative smooth function that will be specified in the course of the argument. 
We set $v := \phi(w)$, and multiply \eqref{e.eq.for.w} by $\phi'(w)$ to obtain that
\begin{equation*}  %\label{e.}
\dr_t v - \b \cdot \nabla v = 0 \quad \text{a.e. in } \R_+\times S^K_+.
\end{equation*}

\smallskip

\emph{Step 2.} Roughly speaking, the idea of the proof is to observe that the ``local mass'' of $v$ cannot increase much, using integration by parts and the fact that $\nabla \cdot \b$ is bounded below. Since $\nabla \cdot \b$ is not well-defined pointwise, we first regularize $f$ and $g$.

\smallskip

Let $\zeta \in C^\infty_c(S^K)$ be a smooth function with compact support in $\{h \in S^K \ : \ |h| \le 1\}$ and such that $\int_{S^K} \zeta = 1$. For every $\ep > 0$, we set
\begin{equation}  
\label{e.def.zetaeps}
\zeta_\ep := \ep^{-\KK} \zeta\Ll(\frac{\cdot}{\ep}\Rr).
\end{equation}
We define the mollified functions on $\R_+ \times S^K_{+\ep}$
\begin{equation}  
\label{e.def.f.eps}
f_\ep := f \ast \zeta_\ep \quad \text{ and } \quad g_\ep := g \ast \zeta_\ep,
\end{equation}
where $\ast$ denotes convolution in the $h$ variable only. Explicitly, for every $t \ge 0$ and $h \in S^K_{+\ep}$,
\begin{equation*}  %\label{e.}
f_\ep(t,h) := \int_{S^K} f(t,h-h') \zeta_\ep(h') \, \d h',
\end{equation*}
where on the right side, the notation $\d h'$ stands for the $\KK$-dimensional Lebesgue measure on $S^K$. 
The definitions of $f_\ep$ and $g_\ep$ make sense since whenever $|h'| \le \ep$ and $h \in S^K_{+\ep}$, we have $\ep \mathrm{I}_K - h' \in S^K_+$ and thus $h-h' \in S^K_{+}$. Moreover, in the case when $h \in S^K_{+\de}$ for some $\de \in[ 2 \ep,1]$, we have $h-h' \in S^K_{+\frac \de 2}$. By the local semiconvexity assumption on $f$ and $g$, there exists a constant $C_\de < \infty$ such that for every $\ep \le \frac \de 2$ and $t \in [\de,\de^{-1}]$,
\begin{equation}  
\label{e.convex.fep}
\text{the mapping }
\Ll\{
\begin{array}{rcl}  %\label{}
\{h \in S^K_{+\de} \ : \ |h| \le \de^{-1}\} & \to & \R \\
h & \mapsto & f_\ep(t,h) + C_\de|h|^2
\end{array}
\Rr.
\text{ is convex},
\end{equation}
and the same property holds with $f_\ep$ replaced by $g_\ep$. We also have that 
\begin{equation}  
\label{e.ae.lim}
\lim_{\ep \to 0} \nabla f_\ep = \nabla f, \qquad \lim_{\ep \to 0} \nabla g_\ep = \nabla g \quad \text{a.e. in } \R_+ \times S^K_+
\end{equation}
(see e.g.\ \cite[Theorem~C.7]{evans} for a proof), as well as
\begin{equation}  
\label{e.grad.bound}
\|\nabla f_\ep\|_{L^\infty(\R_+ \times S^K_{+\ep})} \le \|\nabla f\|_{L^\infty(\R_+ \times S^K_{+})}, \qquad  \|\nabla g_\ep\|_{L^\infty(\R_+ \times S^K_{+\ep})} \le \|\nabla g\|_{L^\infty(\R_+ \times S^K_{+})}.
\end{equation}
For future reference, we introduce the shorthand notation
\begin{equation}  
\label{e.def.L}
L := \max \Ll( \|\nabla f\|_{L^\infty(\R_+ \times S^K_{+})}, \|\nabla g\|_{L^\infty(\R_+ \times S^K_{+})} \Rr) .
\end{equation}
Throughout the proof, we enforce without further mention that $\de > 0$ is sufficiently small that $\de^{-1} \ge L$. 
Identifying $(S^K,|\cdot|)$ with the Euclidean space $\R^\KK$, we denote the Hessian of a function $\td f$ defined on an open subset of $S^K$ by $\nabla^2 \td f$. For the purposes of this proof, we think of $\nabla^2 \td f$ as being a $\KK$-by-$\KK$ symmetric matrix. By \eqref{e.convex.fep}, we have, for every $\ep \le \frac \de 2$, $t \in [\de,\de^{-1}]$ and $h \in S^K_{+\de}$ with $|h|\le \de^{-1}$, 
\begin{equation}  
\label{e.explicit.semiconv}
\nabla^2 f_\ep(t,h) + 2C_\de \mathrm{I}_\KK \in S^\KK_+ \quad \text{and} \quad \nabla^2 g_\ep(t,h) + 2C_\de \mathrm{I}_\KK \in S^\KK_+.
\end{equation}
We set
\begin{equation*}  %\label{e.}
\b_\ep := \int_0^1 \nabla \msf H(a\nabla f_\ep + (1-a)\nabla g_\ep)  \, \d a,
\end{equation*}
and observe that
\begin{equation*}  %\label{e.}
\dr_t v - \b_\ep \cdot \nabla v = (\b - \b_\ep)\cdot \nabla v \quad \text{a.e. in } \R_+\times S^K_{+\ep},
\end{equation*}
that is,
\begin{equation}  
\label{e.eq.v.bep}
\dr_t v - \nabla \cdot (\b_\ep v) + v \nabla \cdot \b_\ep = (\b - \b_\ep )\cdot \nabla v \quad \text{a.e. in } \R_+\times S^K_{+\ep}.
\end{equation}
Moreover,
\begin{equation*}  %\label{e.}
\nabla \cdot \b_\ep = \int_0^1 (a \nabla^2 f_\ep + (1-a) \nabla^2 g_\ep) \cdot \nabla^2 \msf H(a\nabla f_\ep + (1-a)\nabla g_\ep) \, \d a,
\end{equation*}
where, as for $\nabla^2 f_\ep$ and $\nabla^2 g_\ep$, we think of $\nabla^2 \msf H$ as being a $\KK$-by-$\KK$ symmetric matrix. Since $\msf H$ is convex, the $\KK$-by-$\KK$ matrix $\nabla^2 \msf H(p)$ is positive semidefinite for every $p \in S^K_+$. It thus follows from \eqref{e.equiv.pos} and \eqref{e.explicit.semiconv} that  for every $\ep \le \frac \de 2$, we have
\begin{equation*}  %\label{e.}
\int_0^1 (a \nabla^2 f_\ep + (1-a) \nabla^2 g_\ep + 2C_\de \mathrm{I}_{\KK}) \cdot \nabla^2 \msf H(a\nabla f_\ep + (1-a)\nabla g_\ep) \, \d a \ge 0 \quad \text{on } [\de,\de^{-1}] \times S^K_{+\de}.
\end{equation*}
Hence, by \eqref{e.grad.bound} and \eqref{e.def.L}, we get that for every $\ep \le \frac \de 2$,
\begin{equation*}
%\label{e.nonneg.bep}
\nabla \cdot \b_\ep \ge -C_\de \sup \Ll\{ |\nabla^2 \msf H(p)| \ : \ |p| \le L \Rr\}   \qquad \text{on } \ [\de, \de^{-1}]\times S^K_{+\de}.
\end{equation*}
Up to a redefinition of $C_\de < \infty$, we may thus assume that for every $\ep \le \frac \de 2$,
\begin{equation}
\label{e.nonneg.bep}
\nabla \cdot \b_\ep  +C_\de \ge 0   \qquad \text{on } \ [\de, \de^{-1}]\times S^K_{+\de}.
\end{equation}

\smallskip

\emph{Step 3.} We are now ready to implement the argument announced at the beginning of Step 2. 
Denote
\begin{equation*}  %\label{e.}
R := 1+\sup \Ll\{ |\nabla \msf H(p)| \ : \ |p| \le L \Rr\} .
\end{equation*}
We fix $T \ge 1$ and define, for every $t \in [0,\frac T 2]$,
\begin{equation}  
\label{e.def.Bde}
B_\de(t) := \Ll\{ h \in S^K_{+\de} \ : \ |h| \le R(T-t) \Rr\},
\end{equation}
\begin{equation}  
\label{e.def.b+Bde}
\dr_{+} B_\de(t) := \Ll\{ h \in S^K_{+\de} \ : \ |h| = R(T-t) \Rr\} ,
\end{equation}
and 
\begin{equation}  
\label{e.def.bBde}
\dr_0 B_\de(t) := \Ll\{ h \in \dr S^K_{+\de}  \ : \ |h| \le R(T-t) \Rr\} .
\end{equation}
We assume without further mention that $\de > 0$ is sufficiently small that $\de^{-1} \ge RT$. 
Up to a set of null $\frac{K(K-1)}{2}$-Hausdorff measure, the boundary of $B_\de(t)$ is the disjoint union of $\dr_{+} B_\de(t)$ and $\dr_0 B_\de(t)$. We aim to obtain a Gronwall inequality for the quantity
\begin{equation}  
\label{e.def.Jde}
J_\de(t) := \int_{B_\de(t)} v(t,\cdot) = \int_{B_\de(t)} v(t,h) \, \d h \qquad (t \in [0,\tfrac T 2]).
\end{equation}
The function $J_\de$ is Lipschitz, and for almost every $t \in [0,\tfrac T 2]$, we have
\begin{equation*}  %\label{e.}
\dr_t J_\de(t) = \int_{B_\de(t)} \dr_t v(t,\cdot) - R \int_{\dr_+ B_\de(t)} v(t,\cdot),
\end{equation*}
where the second integral is a boundary integral (with respect to the $\frac{K(K-1)}{2}$-dimensional Hausdorff measure on $\dr_+B_\de(t)$). Using \eqref{e.eq.v.bep}, \eqref{e.nonneg.bep}, and that $v \ge 0$, we get that for almost every $t \in [\de,\tfrac T 2]$,
\begin{align*}  %\label{e.}
\dr_t J_\de(t) 
& = \int_{B_\de(t)} \Ll((\b - \b_\ep) \cdot \nabla v -  v \nabla \cdot \b_\ep + \nabla \cdot(\b_\ep v)\Rr)(t,\cdot)- R \int_{\dr_+ B_\de(t)} v(t,\cdot)
\\
& \le C_\de J_\de(t) + \int_{B_\de(t)} \Ll((\b - \b_\ep) \cdot \nabla v\Rr)(t,\cdot) 
\\
& \qquad \qquad + \int_{\dr_+B_\de(t)} \Ll( (\b_\ep \cdot \n - R ) v\Rr)(t,\cdot) + \int_{\dr_0 B_\de(t)} \Ll(v \b_\ep \cdot \n\Rr)(t,\cdot),
\end{align*}
where we denote by $\n$ the unit outer normal to $B_\de(t)$. Using \eqref{e.ae.lim} and the dominated convergence theorem, we see that the first integral on the right side above tends to $0$ as $\ep$ tends to $0$. The first boundary integral on the right side above is nonpositive, by the definitions of $\b_\ep$ and $R$, and \eqref{e.grad.bound}. We now show that
\begin{equation}  
\label{e.dr0}
\int_{\dr_0 B_\de(t)} \Ll(v \b_\ep \cdot \n\Rr)(t,\cdot)\le 0.
\end{equation}
We first notice that $-\n \in S^K_+$ almost everywhere on $\dr_0 B_\de(t)$. By \eqref{e.equiv.pos}, in order to show \eqref{e.dr0}, it suffices to verify that $\b_\ep \in S^K_+$. This follows from the assumption that $\msf H$, $f$ and $g$ are nondecreasing and an application of Lemma~\ref{l.grad}. Summarizing, we have shown that for almost every $t \in [\de,\tfrac T 2]$,
\begin{equation}  
\label{e.gronwall}
\dr_t J_\de(t) \le C_\de J_\de(t).
\end{equation}
Finally, we observe that 
\begin{equation*}  %\label{e.}
w(\de,h) = (f-g)(\de,h) \le \de \|\dr_t (f-g)\|_{L^\infty(\R_+\times S^K_+)}.
\end{equation*}
Selecting $\phi \in C^\infty(\R)$ in such a way that, for every $x \in \R$,
\begin{equation*}  %\label{e.}
\phi(x) > 0 \quad \iff \quad |x| > \de \|\dr_t (f-g)\|_{L^\infty(\R_+\times S^K_+)}
\end{equation*}
ensures that $J_\de(\de) = 0$. Combined with \eqref{e.gronwall} and the fact that $J_\de \ge 0$, this yields that for every $t \in [\de,\frac T 2]$,
\begin{equation*}  %\label{e.}
J_\de \Ll(t\Rr)=0,
\end{equation*}
and in particular, for every $t \in [\de,\frac T 2]$ and almost every $h \in B_\de(t)$,
\begin{equation*}  %\label{e.}
|(f-g)(t,h)| \le \de \|\dr_t (f-g)\|_{L^\infty(\R_+\times S^K_+)} .
\end{equation*}
Letting $\de$ tend to $0$ yields the desired result.
\end{proof}

As long as the assumptions of monotonicity and convexity of $\msf H$ are satisfied, the notion of weak solution in Definition~\ref{def.weaksol} turns out to be much more convenient to work with than the notion of viscosity solution employed in \cite{HJinfer}, as we now explain.

\smallskip

To start with, the monotonicity condition on the solution allows to circumvent the relatively cumbersome treatment of the viscosity-solution interpretation of the Neumann boundary condition used in \cite{HJinfer}, replacing it with a straightforward verification of the fact that $h \mapsto \bar F_N(t,h)$ is nondecreasing for every $N$. 

\smallskip

More significantly, as was shown in \cite[Section~4]{HJinfer}, the proper treatment of odd-degree tensor versions of this problem no longer involve one-sided estimates of the general form $0 \le \dr_t \bar F_N - \msf H(\bar F_N) \le N^{-1} \Delta \bar F_N + \cdots$, but rather two-sided estimates of the form $|\dr_t \bar F_N - \msf H(\bar F_N)| \le N^{-1} \Delta \bar F_N + \cdots$ (whether or not the right-hand side is raised to a power $\al \in (0,1)$ is irrelevant to this discussion, so we ignore it). In principle, this is a worrisome situation, since viscosity solutions are meant to ``remember the sign of the Laplacian'' in the vanishing viscosity limit. The reason why the proof could still be successfully carried out in spite of this is precisely by leveraging on the additional information that $\bar F_N$ is locally semiconvex. Arguments based on the notion of weak solution use this property in a much more transparent way. 

\smallskip

Finally, arguments based on weak solutions are more adapted to the type of error terms that appear in the right-hand side of \eqref{e.approx.hj}. The proof of convergence of $\bar F_N$ will be a more quantitative version of the argument in Proposition~\ref{p.uniqueness}: instead of showing that two weak solutions must be equal, we will show that two ``almost weak solutions'' must be almost equal. This proof is most suited to accommodate for error terms that are estimated in an $L^\infty_t L^1_h$-type norm, see for instance the definition of~$J_\de$ in \eqref{e.def.Jde}. The error terms on the right side of \eqref{e.approx.hj}, in particular $|\nabla F_N - \nabla \bar F_N|^2$ (but also $N^{-1} \Delta \bar F_N$), can be estimated in such a norm in a straightforward way. On the other hand, the notion of viscosity solution handles most naturally errors that are estimated in $L^\infty_t L^\infty_h$. While this problem can be (and has been) circumvented by appealing to local convolution etc., the approach based on weak solutions is more straightforward and easily yields quantitative estimates.

\smallskip

To conclude this section, we give the Hopf-Lax formula for weak solutions to \eqref{e.gen.hj}, which in particular proves the existence of solutions to \eqref{e.gen.hj}. Except for the treatment of the boundary condition, the argument is classical. We denote by $\msf H^*$ the convex dual of $\msf H$: that is, for each $q \in S^K$, we set
\begin{equation}  
\label{e.def.H*}
\msf H^*(q) := \sup_{p \in S^K_+} \Ll( p\cdot q - \msf H(p) \Rr) .
\end{equation}
\begin{proposition}[Hopf-Lax formula]
\label{p.hopf.lax}
Let $H \in C(S^K_+;\R)$ be a convex function such that $H(p)$ depends only on $|p|$. 
Let $\psi : S^K_+ \to \R$ be a nondecreasing Lipschitz function satisfying the following local semiconvexity property: for every $\de > 0$, there exists a constant $C_\de < \infty$ such that 
\begin{equation}  
\label{e.local.semiconvex}
\text{the mapping }
\Ll\{
\begin{array}{rcl}  %\label{}
S^K_{+\de} & \to & \R \\
h & \mapsto & \psi(h) + C_\de \, |h|^2
\end{array}
\Rr.
\text{ is convex}.
\end{equation}
For each $t \ge 0$ and $h \in S^K_+$, we define
\begin{equation}  
\label{e.def.f}
f(t,h) := \sup_{h' \in S^K_+} \Ll( \psi(h') - t \msf H^*\Ll(\frac{h'-h}{t}\Rr) \Rr) ,
\end{equation}
with the understanding that $f(0,\cdot) = \psi$. The function $f$ is a weak solution of \eqref{e.gen.hj}. 
\end{proposition}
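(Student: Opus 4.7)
The plan is to verify Lipschitzness together with the three conditions of Definition~\ref{def.weaksol} for the function $f$ defined by \eqref{e.def.f}. Setting $L := \msf H^*$, convexity of $L$ is automatic, and the hypotheses on $\msf H$ (continuous, real-valued on $S^K_+$, and depending only on $|p|$) make $L$ superlinear at infinity in $S^K_+$; combined with the Lipschitz property of $\psi$, this restricts the effective range of $h'$ in \eqref{e.def.f} to a compact set depending only on $\|\nabla \psi\|_{\infty}$, yielding finiteness, attainment of the supremum at some $h^* \in S^K_+$, and joint Lipschitzness of $f$. Monotonicity in $h$ follows by the substitution $h' \mapsto h' + (h_2 - h_1)$ for $h_2 \ge h_1$: this sends $S^K_+$ into itself, preserves $(h' - h_1)/t$ up to relabelling, and combines with $\psi$ nondecreasing to give $f(t,h_1) \le f(t,h_2)$.

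The PDE is established via the semigroup identity
\[
f(t+s,h) = \sup_{h' \in S^K_+} \bigl[ f(t,h') - s L((h'-h)/s) \bigr],
\]
which I would derive by inserting the definition of $f(t,h')$ into the right-hand side and minimizing in $h'$: Jensen's inequality applied to $L$ with weights $s/(s+t)$ and $t/(s+t)$ gives $tL((h''-h')/t) + sL((h'-h)/s) \ge (s+t) L((h''-h)/(s+t))$, with equality at $h' = (sh''+th)/(s+t) \in S^K_+$, a convex combination of matrices in $S^K_+$. At a differentiability point $(t,h) \in \R_{>0} \times S^K_{++}$ and any $q \in S^K$, taking $h' = h + sq$ in the semigroup and sending $s \to 0^+$ gives $\partial_t f(t,h) \ge \nabla f(t,h) \cdot q - L(q)$; optimizing over $q$, invoking $\nabla f \in S^K_+$ almost everywhere (Lemma~\ref{l.grad}, via monotonicity), and applying the Fenchel--Moreau identity $L^* = \msf H$ on $S^K_+$ (to the convex lower-semicontinuous extension of $\msf H$ by $+\infty$ outside $S^K_+$) yields $\partial_t f \ge \msf H(\nabla f)$. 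The reverse inequality follows from the maximizer $h^* \in S^K_+$: along the affine path $y(s) := h + (s/t)(h^*-h) \in S^K_+$, the semigroup identity and the direct lower bound force the telescoping equality $f(t-s, y(s)) = f(t,h) + sL((h^*-h)/t)$, and differentiating at $s=0$ gives $\partial_t f(t,h) = \nabla f(t,h) \cdot (h^*-h)/t - L((h^*-h)/t) \le \msf H(\nabla f(t,h))$.

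For local semiconvexity I would use a parallelogram argument: given $h_1, h_2$ in the local set, $h_0 := (h_1+h_2)/2$, and an $\eps$-maximizer $h_0^* \in S^K_+$ for $f(t,h_0)$, the candidate $h_i' := h_0^* + (h_i - h_0)$ in $f(t,h_i)$ equalizes the dual terms $(h_i' - h_i)/t = (h_0^* - h_0)/t$; averaging and applying the semiconvexity of $\psi$ at $h_1', h_2'$ (which have midpoint $h_0^*$ and difference $h_1 - h_2$) yields $\tfrac{1}{2}(f(t,h_1)+f(t,h_2)) \ge f(t,h_0) - \tfrac{C}{4}|h_1-h_2|^2 - \eps$, which after $\eps \to 0$ is the required semiconvexity bound with $C_\de$ matching that of $\psi$ on a slightly larger set. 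The main obstacle is ensuring $h_i' \in S^K_+$ so that $\psi(h_i')$ is defined, which in turn demands $h_0^* \ge \|h_1-h_2\|_{\mathrm{op}} \, \mathrm{I}_K/2$, a lower bound on the maximizer not immediately available. I would handle this by extending $\psi$ to a nondecreasing, Lipschitz, and locally semiconvex function $\td\psi$ on all of $S^K$, using monotonicity and the radial structure of $\msf H$ to arrange that the extended supremum $\td f(t,h) := \sup_{h' \in S^K}[\td\psi(h') - tL((h'-h)/t)]$ coincides with $f(t,h)$ on $\R_+ \times S^K_+$; $\td f$ is then a classical Hopf--Lax solution on an unrestricted domain for which the parallelogram argument applies without constraint, and its local semiconvexity transfers to $f$. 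The initial condition $f(0,\cdot) = \psi$ is built into the definition \eqref{e.def.f}.
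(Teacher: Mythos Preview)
Your program is essentially the paper's: the semigroup identity, the differentiation at points of differentiability to extract the PDE, and the parallelogram argument for semiconvexity all appear there in the same form. Your monotonicity argument (substitute $h' \mapsto h' + (h_2-h_1)$) and your choice of the convex combination $h' = (sh'' + th)/(s+t)$ in the semigroup step are in fact slightly cleaner than the paper's treatment of those same points, which first restricts to $h'' \ge h$.

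The one place where the two approaches genuinely diverge is local semiconvexity, and here you are taking the harder road. The paper's first move is to prove the identity $\msf H^*(q) = \msf H^*(q_+)$ from the radial symmetry of $\msf H$, and from it the alternative representation
\[
f(t,h) \;=\; \sup_{h' \in S^K_+}\Bigl[\,\psi(h+h') - t\,\msf H^*\!\bigl(h'/t\bigr)\Bigr].
\]
Because the sup now runs over $h' \ge 0$, the maximizer $h^*$ automatically satisfies $h + h^* \ge h$; so if $h \in S^K_{+2\de}$ and $h''$ is small enough that $h \pm h'' \in S^K_{+\de}$, then $h + h^* \pm h'' \in S^K_{+\de}$ as well, and your parallelogram argument runs with $\psi$ evaluated only where its semiconvexity is already assumed. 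No extension is needed.

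Your proposed workaround---extend $\psi$ to a nondecreasing, Lipschitz, locally semiconvex $\td\psi$ on all of $S^K$, run the unrestricted Hopf--Lax formula $\td f$, and check $\td f = f$ on $\R_+\times S^K_+$---is not wrong in spirit, but the extension is the gap. You have not said how to build $\td\psi$, and preserving local semiconvexity across $\partial S^K_+$ is not automatic (for instance $h \mapsto \psi(h_+)$ need not be semiconvex). Moreover, your check that $\td f = f$ ``using monotonicity and the radial structure of $\msf H$'' is exactly the argument that produces $\msf H^*(q) = \msf H^*(q_+)$ and hence the restricted representation above; once you have done that work, the paper's direct route is available and the extension becomes superfluous.
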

\begin{remark}
The local semiconvexity assumption \eqref{e.local.semiconvex} of the initial condition is used to show that the mapping $h \mapsto f(t,h)$ is locally semiconvex. This property can also be obtained by assuming instead that $\msf H$ is uniformly convex, see \cite[Lemma~3.3.4]{evans}. However, it is convenient to state Proposition~\ref{p.hopf.lax} in this way because the explicit local semiconvexity property of $f$ as stated in \eqref{e.local.semiconv.fth} below will be used later to obtain a quantitative rate of convergence in Theorem~\ref{t.hj}. The assumption that $H(p)$ depends only on $|p|$ simplifies the consideration of problems related to the presence of a boundary. It can certainly be weakened significantly, but I do not know whether it can be removed altogether.
\end{remark}
\begin{proof}[Proof of Proposition~\ref{p.hopf.lax}]
We decompose the proof into five steps.

\smallskip

\emph{Step 1.} In this first step, we use that $\psi$ is nondecreasing and the symmetry assumption on $\msf H$  to assert that, for every $t \ge 0$ and $h \in S^K_+$,
\begin{align}  
\label{e.good.side}
f(t,h) & = \sup \Ll\{ \psi(h') - t \msf H^* \Ll( \frac{h'-h}{t} \Rr) \ : \ h' \in S^K_+ \text{ s.t. } h' \ge h \Rr\} 
\\
\label{e.good.side2}
& = \sup \Ll\{ \psi(h+h') - t \msf H^* \Ll( \frac{h'}{t} \Rr)  \ : \ h' \in  S^K_+ \Rr\} .
\end{align}
For every $h \in S^K$, we write $h_+$ to denote the image of $h$ under the mapping $x \mapsto \max(x,0)$. In a basis where $h$ is diagonal, this means that we replace the negative eigenvalues by zeros. 
We first show that, for every $h \in S^K$,
\begin{equation}
\label{e.msfH.flat}
\msf H^*(h) = \msf H^*(h_+). 
\end{equation}
This statement is equivalent to
\begin{equation}  
\label{e.msfH.flat2}
\sup_{p \in S^K_+} \Ll( p\cdot h - \msf H(p) \Rr) = \sup_{p \in S^K_+} \Ll( p\cdot h_+ - \msf H(p) \Rr).
\end{equation}
Since $h \le h_+$, the statement \eqref{e.msfH.flat2} with the inequality $\le$ in place of the equality is clear by \eqref{e.equiv.pos}. Conversely, for every $p' \in S^K_+$, we may choose $p := \frac{h_+}{|h_+|}{|p'|} \in S^K_+$, so that 
\begin{equation*}  %\label{e.}
|p| = |p'| \quad \text{ and } \quad p\cdot h = |p'| \, \frac{h\cdot h_+}{|h_+|}= |p'| \, |h_+|.
\end{equation*}
Since we assume that $\msf H(p)$ depends only on $|p|$, and since $p'\cdot h_+ \le |p'| \, |h_+|$, this proves the inequality $\ge$ in \eqref{e.msfH.flat2}, and therefore \eqref{e.msfH.flat}.

\smallskip

Using \eqref{e.msfH.flat}, we get that for every $h,h' \in S^K_+$,
\begin{align*}  %\label{e.}
\psi(h') - t\msf H^* \Ll( \frac{h'-h}{t} \Rr) 
& = \psi(h') - t\msf H^* \Ll( \frac{(h'-h)_+}{t} \Rr) 
\\
& \le \psi(h + (h'-h)_+) - t\msf H^* \Ll( \frac{h+(h'-h)_+-h}{t} \Rr) ,
\end{align*}
where we also used that $h'-h \le (h'-h)_+$ and that $\psi$ is nondecreasing in the last step. The identities \eqref{e.good.side}-\eqref{e.good.side2} thus follow. From \eqref{e.good.side2}, it is clear that the mapping $h \mapsto f(t,h)$ is nondecreasing.

\smallskip

\emph{Step 2.} We prove the dynamic programming principle, that is, for every $s,t \ge 0$ and $h \in S^K_+$,
\begin{equation}
\label{e.dyn.prog}
f(t+s,h) = \sup_{h' \in S^K_+} \Ll( f(t,h') -s \msf H^* \Ll(\frac{h'-h}{s}\Rr)  \Rr) .
\end{equation}
Since $\msf H^*$ is convex, we have, for every $s,t \ge 0$ and $h,h',h'' \in S^K_+$,
\begin{equation}  
\label{e.convex.H}
\msf H^* \Ll( \frac{h'-h}{t+s} \Rr) \le \frac{t}{t+s} \msf H^* \Ll( \frac{h'-h''}{t} \Rr)  + \frac{s}{t+s} \msf H^* \Ll( \frac{h''-h}{s} \Rr) ,
\end{equation}
and thus
\begin{align}  
\label{e.ineq.dynprog1}
f(t+s,h)
& \ge \sup_{h',h'' \in S^K_+} \Ll( \psi(h') - t \msf H^* \Ll( \frac{h'-h''}{t} \Rr)  - s  \msf H^* \Ll( \frac{h''-h}{s} \Rr) \Rr) 
\\
\label{e.ineq.dynprog2}
& \ge \sup_{h'' \in S^K_+} \Ll( f(t,h'') - s  \msf H^* \Ll( \frac{h''-h}{s} \Rr) \Rr) .
\end{align}
Arguing as in Step 1 (and recalling that $h \mapsto f(t,h)$ is nondecreasing), we see that we can restrict the supremum in \eqref{e.ineq.dynprog2} to $h'' \ge h$. Moreover, for each $h'' \ge h$, we can choose $h' = h + \frac{t+s}{s}(h''-h) \in S^K_+$ and observe that in this case,
\begin{equation*}  %\label{e.}
\frac{h'-h}{t+s} = \frac{h'-h''}{t} = \frac{h''-h}{s}.
\end{equation*}
With this choice of $h'$, the inequality in \eqref{e.convex.H} is an equality, and we can thus assert that the inequalities in \eqref{e.ineq.dynprog1}-\eqref{e.ineq.dynprog2} are equalities as well.

\smallskip

\emph{Step 3.} We show that $f$ is Lipschitz continuous. It follows from \eqref{e.good.side2} that for every $h,h_1 \in S^K_+$, 
\begin{equation*}  %\label{e.}
f(t,h) \le f(t,h_1) + \|\nabla \psi\|_{L^\infty} |h-h_1|,
\end{equation*}
and by symmetry, that 
\begin{equation}  
\label{e.Lip.h}
\Ll| f(t,h) - f(t,h_1) \Rr| \le \|\nabla \psi\|_{L^\infty} |h-h_1|.
\end{equation}
This shows that $f$ is Lipschitz continuous in the $h$ variable. For the regularity in time, we recall that \eqref{e.dyn.prog} also holds with the additional restriction $h' \ge h$, and appeal to~\eqref{e.Lip.h} to write, for every $s,t\ge 0$,
\begin{equation*}  %\label{e.}
f(t,h) \le f(t+s,h) \le \sup_{h' \ge h} \Ll( f(t, h) + \|\nabla \psi\|_{L^\infty}|h'-h| - s \msf H^*\Ll( \frac{h'-h}{s} \Rr) \Rr) .
\end{equation*}
Recall the definition of $\msf H^*$ in \eqref{e.def.H*}. Testing the supremum in this definition with $p = \lambda \frac{h'-h}{|h'-h|} \in S^K_+$ for some $\lambda > 0$ to be determined, we obtain that for every $h' \ge h \in S^K_+$,
\begin{equation*}  %\label{e.}
s \msf H^* \Ll( \frac{h'-h}{s} \Rr) \ge \lambda |h'-h| - s\msf H \Ll( \lambda \frac{h'-h}{|h'-h|} \Rr) .
\end{equation*}
Selecting $\lambda = \|\nabla \psi\|_{L^\infty}$, we conclude that 
\begin{equation*}  %\label{e.}
0 \le f(t+s,h ) - f(t,h) \le s \, \sup \Ll\{\msf H(p) \ : \ p \in S^K_+, \ |p| \le \|\nabla \psi\|_{L^\infty} \Rr\}.
\end{equation*}
This shows in particular that the function $f$ is Lipschitz continuous in the $t$ variable. 

\smallskip

\emph{Step 4.} We show that for every $t \ge 0$ and  $\de > 0$, 
\begin{equation}
\label{e.local.semiconv.fth}
\text{the mapping }
\Ll\{
\begin{array}{rcl}  %\label{}
S^K_{+2\de} & \to & \R \\
h & \mapsto & f(t,h) + C_\de \, |h|^2
\end{array}
\Rr.
\text{ is convex},
\end{equation}
where $C_\de$ is the constant appearing in \eqref{e.local.semiconvex}. 
Reproducing the argument in the previous step with $\lambda = 1+\|\nabla \psi\|_{L^\infty}$, we see that the supremum in \eqref{e.good.side} is attained. Fix $t > 0$, $h \in S^K_{+2\de}$, and denote by $h' \in S^K_+$ a point realizing the supremum in \eqref{e.good.side2}:
\begin{equation*}  %\label{e.}
f(t,h) = \psi(h+h') - t \msf H^* \Ll( \frac{h'}{t} \Rr) .
\end{equation*}
for every $h'' \in S^K$ sufficiently small (in terms of $\de$) to guarantee that $h-h''$ and $h + h''$ belong to $S^K_{+\de}$, we have
\begin{equation*}  %\label{e.}
f(t,h+h'') + f(t,h-h'') - 2 f(t,h) \le \psi(h+h'+h'') + \psi(h+h'-h'') - 2 \psi(h+h'). 
\end{equation*}
(This follows by writing \eqref{e.good.side2} for $f(t,h+h'')$ and $f(t,h-h'')$ and testing the supremum at $h'$.) Recalling that $h' \in S^K_+$ and using \eqref{e.local.semiconvex}, we deduce that 
\begin{equation*}  %\label{e.}
f(t,h+h'') + f(t,h-h'') - 2f(t,h) \le 2C_\de |h''|^2. 
\end{equation*}
This proves \eqref{e.local.semiconv.fth}.

\smallskip

\emph{Step 5.} Since $f$ is Lipschitz continuous, it is differentiable almost everywhere. In this final step, we show that for every $t  >0$ and $h \in S^K_{++}$, if $(t,h)$ is a point of differentiability of $f$, then the equation \eqref{e.gen.hj} is satisfied at $(t,h)$. We fix such $(t,h)$, and for every $q \in S^K$ and $s > 0$ sufficiently small, we use \eqref{e.dyn.prog} to write
\begin{equation*}  %\label{e.}
f(t+s,h) \ge f(t,h + sq) -  s\msf H^*(q).
\end{equation*}
Letting $s$ tend to $0$, we infer that
\begin{equation}  
\label{e.one.sided.eq0}
\dr_t f(t,h) - q \cdot \nabla f(t,h) +  \msf H^*(q) \ge 0.
\end{equation}
We may view the function $\msf H$ as defined on $S^K$, by setting $\msf H(p) = +\infty$ whenever $p \notin S^K_+$. This function is convex and lower semicontinuous, and thus, for every $p \in S^K$,
\begin{equation}  
\label{e.convex.duality}
\msf H^{**}(p) := \sup_{q \in S^K} \Ll(p\cdot q - \msf H^*(q)\Rr) = \msf H(p).
\end{equation}
Taking the infimum over $q$ in \eqref{e.one.sided.eq0}, we thus conclude that
\begin{equation}
\label{e.one.sided.eq}
\Ll(\dr_t f - \msf H(\nabla f)\Rr)(t,h) \ge 0.
\end{equation}
(Since $f$ is nondecreasing, we already knew by Lemma~\ref{l.grad} that $\nabla f(t,h) \in S^K_+$.)
There remains to show the converse inequality to \eqref{e.one.sided.eq}. Let $h' \in S^K_+$ be such that
\begin{equation*}  %\label{e.}
f(t,h) = \psi(h') - t \msf H^* \Ll( \frac{h'-h}{t} \Rr) .
\end{equation*}
For every $s > 0$ sufficiently small, we have
\begin{align*}  %\label{e.}
f\Ll(t-s,h +\frac s t (h'-h)\Rr) 
& \ge \psi(h') - (t-s) \msf H^* \Ll( \frac{h'-h- \frac s t (h'-h)}{t-s} \Rr) 
\\
& \ge f(t,h) + s \msf H^* \Ll( \frac{h'-h}{t} \Rr).
\end{align*}
Letting $s$ tend to $0$, we obtain that
\begin{equation*}  %\label{e.}
\dr_t f(t,h) - \frac{h'-h}{t} \cdot \nabla f(t,h) + \msf H^*\Ll( \frac{h'-h}{t} \Rr) \le 0.
\end{equation*}
Together with \eqref{e.convex.duality}, this yields the converse inequality to \eqref{e.one.sided.eq} and thus completes the proof.
\end{proof}

%
%
%
%%%%%%%%%%%%%%%%%%%%%%%%%%%%
%%%%%%%%%%%%%%%%%%%%%%%%%%%%
%
%
%

\section{Approximate Hamilton-Jacobi equation and basic estimates}
\label{s.approx.hj}
The main goal of this section is to prove Proposition~\ref{p.approx.hj}. 
We will also record basic derivative and concentration estimates that will be useful in the next section. 

\subsection{Nishimori identity}
We start by presenting the Nishimori identity, a simple but crucial property which allows to avoid facing a never-ending cascade of new replicas as we differentiate the free energy. 
We denote by $\la \cdot \ra$ the Gibbs measure associated with the energy~$H_N(t,{h},\cdot)$. That is, for each bounded measurable function $f : \R^{N\times K} \to \R$, we set
\begin{equation}  
\label{e.def.Gibbs}
\la f(x) \ra := \frac{1}{Z_N(t,{h})} \int_{\R^{N\times K}} f(x) e^{H_N(t,{h},x)} \, \d P_N(x),
\end{equation}
where 
\begin{equation*}  %\label{e.}
Z_N(t,{h}) := \int_{\R^{N\times K}} e^{H_N(t,{h},x)} \, \d P_N(x).
\end{equation*}
Note that although the notation does not display it, this random probability measure depends on $t, {h}$, as well as on the realization of the random variables~$\bar x$, $W$ and $z$. 
We also consider ``replicated'' (or tensorized) versions of this measure, and write $x$, $x'$, $x''$, etc.\ for the canonical ``replicated'' random variables. Conditionally on $\bar x$, $W$ and $z$, these random variables are independent and each is distributed according to the Gibbs measure $\la \cdot \ra$. With a slight abuse of notation, we still denote this tensorized measure by $\la \cdot \ra$. That is, for every bounded measurable function $f : (\R^{N\times K})^2\to \R$, we denote
\begin{equation*}  %\label{e.}
\la f(x,x') \ra = \frac{1}{Z_N^2(t,h)} \int_{(\R^{N\times K})^2} f(x,x') e^{H_N(t,h,x) + H_N(t,h,x')} \, \d P_N(x) \, \d P_N(x'),
\end{equation*}
and so on for more than two replicas. 

\smallskip

As discussed in the introduction and shown in the appendix, the measure $\la \cdot \ra$ is the conditional measure of~$\bar x$ given $\mcl Y := (Y,Y')$. That is, for every bounded measurable function $f : \R^{N\times K} \to \R$, 
\begin{equation}  
\label{e.def.cond.measure}
\la f(x) \ra = \E \Ll[ f(\bar x) \ \big\vert \ \mcl Y\Rr] .
\end{equation}
In particular, we have
\begin{equation*}  %\label{e.}
\E \la f(x) \ra = \E \Ll[ f(\bar x) \Rr] .
\end{equation*}
Using the conditional independence between replicas, we also have, for any bounded measurable functions $f_1,f_2 : \R^{N\times K} \to \R$,
\begin{align*}  %\label{e.}
\E \la f_1(x) \, f_2 (x') \ra 
& = \E \Ll[ \la f_1(x) \ra \la f_2(x) \ra \Rr] 
\\
& = \E \Ll[ \la f_1(x) \ra f_2(\bar x) \Rr] .
\end{align*}
By the monotone class lemma, this implies that for any bounded measurable function $f : (\R^{N\times K})^2 \to \R$, 
\begin{equation}
\label{e.Nishi}
\E \la f(x,x') \ra = \E \la f(x,\bar x) \ra. 
\end{equation}
This identity can be generalized to more than two replicas: for instance,
\begin{equation*}  %\label{e.}
\E \la f(x,x',x'') \ra = \E \la f(x,x',\bar x) \ra.
\end{equation*}
This relation, regardless of the number of replicas involved, is often called the \emph{Nishimori identity}. This property will be the crucial ingredient allowing us to ``close the equation'' and show that the system stays in a replica-symmetric phase. We will repeatedly use it without further mention. Notice that we can also incorporate dependencies in $\mcl Y$, that is:
\begin{equation*}  %\label{e.}
\E \la f\Ll( \mcl Y , x,x' \Rr) \ra = \E \la f\Ll( \mcl Y,  x,\bar x\Rr) \ra,
\end{equation*}
and so on with more replicas.

\subsection{Matrix square root}
We record here some elementary properties of the matrix square root that will be useful in the sequel. Denote by $D_{\sqrt{h}}$ the differential of the square-root function, seen as a mapping from $S^K_{++}$ to $S^K$, at the point $h \in S^K_{++}$. This differential is a linear mapping on $S^K$ which satisfies, for every $h \in S^K_{++}$ and $a \in S_K$,
\begin{equation*}  %\label{e.}
D_{\sqrt{h}}(a) = \lim_{\ep \to 0} \ep^{-1} \Ll( \sqrt{h+\ep a} - \sqrt{h} \Rr) .
\end{equation*}
Notice that
\begin{multline*}  %\label{e.}
h + \ep a = \Ll( \sqrt{h + \ep a} \Rr) ^2 = \Ll(\sqrt{h} + \ep D_{\sqrt{h}}(a)   + o(\ep)\Rr)^2 
\\
= h + \ep \Ll( \sqrt{h} \, D_{\sqrt{h}}(a)  + D_{\sqrt{h}}(a)  \, \sqrt{h} \Rr) + o(\ep),
\end{multline*}
and thus
\begin{equation}  
\label{e.identif.Dsqrt}
\sqrt{h} \, D_{\sqrt{h}}(a)  + D_{\sqrt{h}}(a)  \, \sqrt{h} = a.
\end{equation}
Although we will not need this fact, it is interesting to note that this property characterizes $D_{\sqrt{h}}$. This tells us that any identity involving $D_{\sqrt{h}}$ is provable using only \eqref{e.identif.Dsqrt} as a definition of $D_{\sqrt{h}}$.
\begin{lemma}[unique Jordan inverse]
\label{l.unique.jordan}
Let $A \in S^K_{++}$ and $B,C \in \R^{K\times K}$ be such that 
\begin{equation}  
\label{e.unique.jordan}
AB + BA = AC + CA.
\end{equation}
Then $B = C$.
\end{lemma}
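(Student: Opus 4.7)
The plan is to reduce the statement to the homogeneous version by linearity and then exploit the spectral theorem for $A$.

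First I would set $D := B - C$. Then \eqref{e.unique.jordan} becomes $AD + DA = 0$, and the goal is to prove $D = 0$. Note that $D$ is an arbitrary element of $\R^{K \times K}$, not assumed symmetric, so we cannot invoke any positive-definiteness of $D$ directly.

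Next, since $A \in S^K_{++}$, the spectral theorem supplies an orthogonal matrix $U$ and a diagonal matrix $\Lambda = \diag(\lambda_1, \ldots, \lambda_K)$ with $\lambda_i > 0$ for each $i$, such that $A = U \Lambda U\t$. Conjugating the equation $AD + DA = 0$ by $U$ (multiplying on the left by $U\t$ and on the right by $U$) and setting $E := U\t D U \in \R^{K \times K}$, one obtains
\begin{equation*}
\Lambda E + E \Lambda = 0.
\end{equation*}
Reading off the $(i,j)$-entry of this identity gives $(\lambda_i + \lambda_j) E_{ij} = 0$. Since $\lambda_i + \lambda_j > 0$ for every $i,j$, it follows that $E_{ij} = 0$ for every $i,j$, hence $E = 0$, and therefore $D = U E U\t = 0$, i.e.\ $B = C$.

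There is no real obstacle: the only subtlety worth flagging is that $B$ and $C$ are not assumed symmetric, so the argument must work for a general $D \in \R^{K\times K}$, which is automatic in the entrywise reading above because the conclusion $(\lambda_i + \lambda_j)E_{ij} = 0$ does not rely on any symmetry of $E$. One could also phrase the proof without coordinates by noting that the linear operator $X \mapsto AX + XA$ on $\R^{K\times K}$ is the Lyapunov/Sylvester operator associated with $A$ and $-A$, and is invertible precisely because $\mathrm{spec}(A) \cap \mathrm{spec}(-A) = \emptyset$ (here $\mathrm{spec}(A) \subset (0,\infty)$ and $\mathrm{spec}(-A) \subset (-\infty,0)$); but the diagonalization argument above is the most direct.
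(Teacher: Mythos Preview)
Your proof is correct and follows essentially the same approach as the paper: reduce to the homogeneous case $AD+DA=0$ with $D=B-C$, diagonalize $A$, and read off $(\lambda_i+\lambda_j)E_{ij}=0$ entrywise. The only cosmetic difference is that the paper phrases the reduction as ``without loss of generality $C=0$ and $A$ is diagonal'' rather than conjugating explicitly by $U$.
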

\begin{proof}
Without loss of generality, we can assume that $C = 0$ and that $A$ is a diagonal matrix, with positive eigenvalues $\lambda_1,\ldots, \lambda_K$. The condition \eqref{e.unique.jordan} then reads, for $B = (B_{kl})_{1 \le k,l \le K}$,
\begin{equation*}  %\label{e.}
\forall 1 \le k,l \le K, \qquad B_{kl}(\lambda_k + \lambda_l) = 0.
\end{equation*}
Since $\lambda_k + \lambda_l > 0$, this implies that $B = 0$.
\end{proof}
It follows from \eqref{e.identif.Dsqrt} that whenever $b \in S^K$, we have
\begin{equation}  
\label{e.sym.easy}
\Ll(D_{\sqrt{h}}(a)\sqrt{h}\Rr) \cdot b = \Ll(\sqrt{h}D_{\sqrt{h}}(a)\Rr) \cdot b = \frac{a\cdot b}{2}.
\end{equation}
When $b \in \R^{K\times K}$ is not assumed to be symmetric, we can  estimate the error in this relation in terms of the size of the antisymmetric part of $b$. Recall from \eqref{e.def.kappa} that we denote by $\kappa(h)$ the condition number of a matrix $h \in S^K_{+}$.
\begin{lemma}
\label{l.symmetrize}
There exists $C < \infty$ such that for every $h \in S^K_{++}$, $a \in S^K$ and $b \in \R^{K\times K}$, 
\begin{equation*}  %\label{e.}
\Ll| D_{\sqrt{h}}(a)\sqrt{h} \cdot b - \frac{a\cdot b}{2} \Rr| + \Ll| \sqrt{h} D_{\sqrt{h}}(a)\cdot b - \frac{a\cdot b}{2} \Rr| \le C \sqrt{\kappa(h)} \, |a|\, \Ll| b - b\t \Rr|. 
\end{equation*}
\end{lemma}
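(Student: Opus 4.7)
My plan is to split $b$ into its symmetric and antisymmetric parts and exploit the algebraic cancellations in each. Write $b = b_s + b_a$ with $b_s := \tfrac 1 2 (b + b\t)$ and $b_a := \tfrac 1 2 (b - b\t)$, so that $|b_a| = \tfrac 1 2 |b - b\t|$. Since $b_s \in S^K$, the identity \eqref{e.sym.easy} gives
\[
D_{\sqrt{h}}(a)\sqrt{h} \cdot b_s = \sqrt{h}\, D_{\sqrt{h}}(a) \cdot b_s = \frac{a \cdot b_s}{2}.
\]
On the antisymmetric side, since $a \in S^K$ is symmetric and $b_a$ is antisymmetric, one has $a \cdot b_a = \tr(a b_a) = \tr((a b_a)\t) = -\tr(a b_a) = 0$. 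Both differences in the lemma therefore reduce to $D_{\sqrt{h}}(a)\sqrt{h} \cdot b_a$ and $\sqrt{h}\, D_{\sqrt{h}}(a) \cdot b_a$. Since $D_{\sqrt{h}}(a)$ and $\sqrt{h}$ are both symmetric, $(D_{\sqrt{h}}(a)\sqrt{h})\t = \sqrt{h}\, D_{\sqrt{h}}(a)$, so the two matrices have the same Frobenius norm, and by Cauchy--Schwarz in the Frobenius inner product it suffices to prove
\[
|D_{\sqrt{h}}(a)\sqrt{h}| \leq C \sqrt{\kappa(h)}\, |a|.
\]

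To obtain this norm bound, I would work in an orthonormal basis that diagonalizes $h$, with positive eigenvalues $\lambda_1, \ldots, \lambda_K$. Reading the Sylvester-type identity \eqref{e.identif.Dsqrt} entrywise and invoking Lemma~\ref{l.unique.jordan} for uniqueness yields the explicit formula $(D_{\sqrt{h}}(a))_{ij} = a_{ij} / (\sqrt{\lambda_i} + \sqrt{\lambda_j})$, whence
\[
|D_{\sqrt{h}}(a)\sqrt{h}|^2 = \sum_{i,j} \frac{\lambda_j \, a_{ij}^2}{(\sqrt{\lambda_i} + \sqrt{\lambda_j})^2} \leq \frac{\lambda_{\max}(h)}{4 \lambda_{\min}(h)}\, |a|^2.
\]
Since $\lambda_{\max}(h) \leq |h|$ and $\lambda_{\min}(h)^{-1} \leq |h^{-1}|$ (both following from the definition of the Frobenius norm), one has $\lambda_{\max}(h)/\lambda_{\min}(h) \leq \kappa(h)$, and the claimed estimate follows with $C = 1/2$.

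The only step carrying real content is the componentwise solution of \eqref{e.identif.Dsqrt} in the eigenbasis of $h$, which is immediate. I do not anticipate any genuine obstacle: the lemma is essentially a quantitative refinement of \eqref{e.sym.easy} that tracks what survives when $b$ is allowed to have an antisymmetric part.
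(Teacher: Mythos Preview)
Your proof is correct and follows essentially the same approach as the paper: split $b$ into symmetric and antisymmetric parts, dispatch the symmetric part via \eqref{e.sym.easy}, and bound the antisymmetric contribution by diagonalizing $h$ and reading off the entries of $D_{\sqrt{h}}(a)$ from \eqref{e.identif.Dsqrt}. The only cosmetic difference is that the paper first isolates the bound $|D_{\sqrt{h}}(a)| \le C|a|\,|h^{-1}|^{1/2}$ (labeled \eqref{e.norm.Dsqrt} and reused elsewhere) and then combines it with $|\sqrt{h}|$, whereas you bound $|D_{\sqrt{h}}(a)\sqrt{h}|$ directly; your route gives a slightly cleaner constant but the argument is the same.
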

\begin{proof}
We first show that 
\begin{equation}
\label{e.norm.Dsqrt}
|D_{\sqrt{h}}(a)| \le C |a| \, \Ll|h^{-1}\Rr|^\frac 1 2.
\end{equation}
The proof is similar to that of Lemma~\ref{l.unique.jordan}. Without loss of generality, we assume that $h$ is diagonal, with positive eigenvalues $0 < \lambda_1\le \ldots\le\lambda_K$. Denoting $(\msf d_{kl})_{1 \le k,l \le K}$ the entries of the matrix $D_{\sqrt{h}}(a)$, and $a = (a_{kl})_{1 \le k,l\le K}$, the relation \eqref{e.identif.Dsqrt} reads
\begin{equation*}  %\label{e.}
\msf d_{kl} \Ll( \sqrt{\lambda_k} + \sqrt{\lambda_l} \Rr) = a_{kl}.
\end{equation*}
We deduce that
\begin{equation*}  %\label{e.}
\max_{1\le k,l \le K} |\msf d_{kl}| \le \lambda_1^{-\frac 1 2} \max_{1 \le k,l \le K} |a_{kl}| \le \Ll(\sum_{k = 1}^K \lambda_k^{-2}\Rr)^{\frac 1 4}\max_{1 \le k,l \le K} |a_{kl}|,
\end{equation*}
and thus that \eqref{e.norm.Dsqrt} holds, by equivalence of norms. The conclusion of the lemma then follows using the decomposition
\begin{equation*}  %\label{e.}
2 D_{\sqrt{h}}(a)\sqrt{h} \cdot b = D_{\sqrt{h}}(a)\sqrt{h} \cdot (b+b\t) + D_{\sqrt{h}}(a)\sqrt{h} \cdot (b-b\t),
\end{equation*}
and that by \eqref{e.sym.easy}, the first term on the right side is $a \cdot \frac{b + b\t}2 = a\cdot b$, since $a \in S^K$.
\end{proof}

\subsection{Proof of Proposition~\ref{p.approx.hj}}
For convenience, we now record some identities that follow from Gaussian integration by parts.

\begin{lemma}[Gaussian integration by parts]
\label{l.gaussian}
For every bounded measurable function $F : (\R^{N\times K})^2 \to \R^{N\times K}$, we have
\begin{equation}  
\label{e.gibp}
\E \la z \cdot F(x,\bar x) \ra = \E \la (x-x') \sqrt{h} \cdot F(x, \bar x) \ra 
\end{equation}
and
\begin{equation}
\label{e.gibp2}
\E \la \Ll(z \cdot F(x,\bar x)\Rr)^2 \ra = \E \la \Ll((x-x') \sqrt{h} \cdot F(x, \bar x)\Rr) \Ll( z \cdot F(x,\bar x) \Rr)  \ra + \E \la |F(x, \bar x)|^2 \ra ,
\end{equation}
while for $F : (\R^{N\times K})^3 \to \R^{N\times K}$,
\begin{equation}  
\label{e.gibp3}
\E \la z \cdot F(x,x',\bar x) \ra = \E \la (x+x'-2x'') \sqrt{h} \cdot F(x,x',\bar x) \ra .
\end{equation}
Finally,
\begin{equation}  
\label{e.gibp4}
\E \la x \cdot W  x \ra =\sqrt{\frac t N}\, \E \la |x\t x|^2 - |x\t \bar x|^2 \ra.
\end{equation}
\end{lemma}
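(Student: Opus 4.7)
The tool of choice is Gaussian integration by parts: for a standard Gaussian vector $Z$ and a sufficiently nice function $G(Z)$, one has $\E[Z_i G(Z)] = \E[\partial_{Z_i} G(Z)]$. My plan is to apply this to the variables $z$ for \eqref{e.gibp}--\eqref{e.gibp3} and to $W$ for \eqref{e.gibp4}. The essential preliminary calculation is to record the derivatives of the Gibbs weight: from the definition of $H_N$, the only $z$-dependent term is $\sqrt{h}\cdot x\t z$, so $\partial_{z_{il}} H_N = (x\sqrt{h})_{il}$, while the only $W$-dependent term is $\sqrt{t/N}\, x\cdot Wx$, so $\partial_{W_{ij}} H_N = \sqrt{t/N}\,(xx\t)_{ij}$.

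For \eqref{e.gibp} I would write $\E[z_{il}\la F_{il}(x,\bar x)\ra] = \E[\partial_{z_{il}}\la F_{il}(x,\bar x)\ra]$. Since $F$ itself does not depend on $z$, the derivative only acts on the Gibbs weight, producing $\la F_{il}(x\sqrt{h})_{il}\ra - \la F_{il}\ra\la(x\sqrt{h})_{il}\ra$. Interpreting the product of two single-replica Gibbs averages as a two-replica Gibbs average, the second term becomes $\la F_{il}(x,\bar x)(x'\sqrt{h})_{il}\ra$; summing over $(i,l)$ yields \eqref{e.gibp}.

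For \eqref{e.gibp2} and \eqref{e.gibp3} the same plan works with minor variations. For \eqref{e.gibp2} I would rewrite $\E\la(z\cdot F)^2\ra = \E\sum_{il} z_{il}\la F_{il}\,(z\cdot F)\ra$ and apply IBP to $z_{il}$; now $\partial_{z_{il}}$ additionally hits the explicit factor $z\cdot F$, contributing $\la F_{il}^2\ra$, whose sum over $(i,l)$ is $\la|F|^2\ra$, while the Gibbs-weight part combined with the replica trick produces the $(x-x')\sqrt{h}$ structure. For \eqref{e.gibp3} the Gibbs measure is two-replica, so differentiating $\exp(H_N(x)+H_N(x'))$ contributes $(x\sqrt{h}+x'\sqrt{h})_{il}$ from both factors, while the product $\la F_{il}\ra\la(x\sqrt{h})_{il}\ra$ is rewritten using a third replica $x''$ and doubles by symmetry between $x$ and $x'$, yielding the $-2(x''\sqrt{h})_{il}$ contribution.

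Finally, for \eqref{e.gibp4}, IBP on $W_{ij}$ gives $\E\la x\cdot Wx\ra = \sqrt{t/N}\,\E[\la|xx\t|^2\ra - \la xx\t\cdot x'x'\t\ra]$. Cyclicity of the trace converts these to $|x\t x|^2$ and $|x\t x'|^2$ respectively, and a final application of the Nishimori identity \eqref{e.Nishi} replaces $x'$ by $\bar x$ in the second term. I do not foresee serious technical obstacles; the only real points of care are (i) that $F$ is by assumption independent of $z$ and of $W$, so the explicit derivative contributes only when the integration variable appears outside of $F$ as in \eqref{e.gibp2} and \eqref{e.gibp4}, and (ii) being disciplined about which replica label is introduced at each step so that Nishimori can later be invoked cleanly.
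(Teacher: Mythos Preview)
Your proposal is correct and follows essentially the same approach as the paper: componentwise Gaussian integration by parts on $z_{il}$ (respectively $W_{ij}$), with the quotient-rule term rewritten via an additional replica, and the Nishimori identity invoked for \eqref{e.gibp4}. The only cosmetic difference is that for \eqref{e.gibp2} the paper integrates by parts in $z_{ik}$ while leaving the second factor $z_{jl}$ untouched, treating the diagonal and off-diagonal index pairs separately before substituting $f=F_{ik}F_{jl}$, whereas you differentiate $\la F_{il}(z\cdot F)\ra$ directly; the two organizations are equivalent.
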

\begin{proof}
We write $F = (F_{ik})_{1 \le i \le N, \, 1 \le k \le K}$ and notice that, by Gaussian integration by parts,
\begin{align*}  %\label{e.}
\E \Ll[ z_{ik} \la F_{ik}(x,\bar x) \ra \Rr] 
& = \E \Ll[ \dr_{z_{ik}} \la F_{ik}(x,\bar x) \ra \Rr] 
\\
& = \E \la F_{ik}(x,\bar x) \Ll( (x-x')\sqrt{h} \Rr)_{ik}  \ra .
\end{align*}
Summing over $(i,k)$, we obtain \eqref{e.gibp}. The proof of \eqref{e.gibp3} is similar: we write
\begin{align*}  %\label{e.}
\E \Ll[ z_{ik} \la F_{ik}(x,x',\bar x) \ra \Rr] 
& =  
\E \Ll[ \dr_{z_{ik}} \la F_{ik}(x,x',\bar x) \ra \Rr] 
\\
& = 
\E \la F_{ik}(x,x',\bar x) \Ll( (x + x' - 2 x'')\sqrt{h} \Rr)_{ik} \ra,
\end{align*}
and then sum over $(i,k)$.
For \eqref{e.gibp2}, we calculate first, for any bounded measurable $f :  (\R^{N\times K})^2 \to \R$ and any $(i,k), (j,l) \in \{1,\ldots,N\} \times \{1,\ldots, K\}$ satisfying $(i,k) \neq (j,l)$,
\begin{align*}  %\label{e.}
\E \Ll[ z_{ik} z_{jl} \la f(x,\bar x) \ra \Rr] 
& = \E \Ll[ z_{jl} \dr_{z_{ik}} \la f(x,\bar x) \ra \Rr] 
\\
& = \E \Ll[ z_{jl} \la f(x,\bar x) \Ll( (x-x')\sqrt{h} \Rr)_{ik}  \ra \Rr] .
\end{align*}
When $(i,k) = (j,l)$ this relation becomes
\begin{equation*}  %\label{e.}
\E \Ll[ z_{ik}^2 \la f(x,\bar x) \ra \Rr] =  \E \Ll[ z_{ik} \la f(x,\bar x) \Ll( (x-x')\sqrt{h} \Rr)_{ik}  \ra \Rr] + \E  \la f(x,\bar x) \ra .
\end{equation*}
Replacing $f(x,\bar x)$ by $F_{ik}(x,\bar x) F_{jl}(x,\bar x)$ and summing over all indices, we obtain \eqref{e.gibp2}.
Similarly, we observe that
\begin{equation*}  %\label{e.}
\E \la x \cdot W  x \ra = \E \la W \cdot x x\t\ra = \sqrt{\frac t N} \E \Ll[\la |xx\t|^2 \ra - |\la xx\t \ra|^2 \Rr],
\end{equation*}
and since
\begin{equation*}  %\label{e.}
\Ll| \la x x\t \ra \Rr| ^2 = \la x x\t \cdot \bar x \,\bar x\t \ra = \la |x\t \bar x |^2 \ra,
\end{equation*}
we obtain \eqref{e.gibp4}.
\end{proof}

We next present an intermediate result towards the proof of Proposition~\ref{p.approx.hj}, which is interesting on its own in that it displays the relevance of the question of assessing the concentration of the matrix $x\t \bar x \in \R^{K\times K}$.

\begin{lemma}
\label{l.first.approx.hj}
We have
\begin{equation}
\label{e.first.approx.hj}
\partial_t \bar F_N - 2 |\nabla \bar F_N|^2  = \frac 1 {2N^2} \E \la \Ll| x\t \bar x - \E\la x\t\bar x \ra \Rr|^2 \ra.
\end{equation}
Moreover, for every $t \ge 0$, the mapping $h \mapsto \bar F_N(t,h)$ is nondecreasing.
\end{lemma}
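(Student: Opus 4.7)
My plan is to compute $\partial_t \bar F_N$ and $\nabla \bar F_N$ explicitly using the Gaussian integration by parts formulas of Lemma~\ref{l.gaussian}, the Nishimori identity \eqref{e.Nishi}, and the square-root identities \eqref{e.identif.Dsqrt}--\eqref{e.sym.easy}, and then to combine them algebraically. The monotonicity of $h \mapsto \bar F_N(t,h)$ will be a direct by-product of the explicit form of $\nabla \bar F_N$.

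For the time derivative, I differentiate \eqref{e.def.FN}--\eqref{e.def.barFN} under the integral sign to obtain
\begin{equation*}
\partial_t \bar F_N = \frac{1}{N}\E \la \tfrac{1}{2\sqrt{tN}}\, x\cdot Wx + \tfrac{1}{N}|x\t \bar x|^2 - \tfrac{1}{2N}|x\t x|^2 \ra.
\end{equation*}
Substituting \eqref{e.gibp4} to replace $\E\la x\cdot Wx\ra$ by $\sqrt{t/N}\,\E\la|x\t x|^2 - |x\t\bar x|^2\ra$ cancels the $|x\t x|^2$ contributions and leaves $\partial_t \bar F_N = \frac{1}{2N^2}\E\la|x\t\bar x|^2\ra$. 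For the gradient at $a\in S^K$, I compute $\partial_a H_N = D_{\sqrt h}(a)\cdot x\t z + a\cdot x\t\bar x - \tfrac{1}{2}a\cdot x\t x$. Transposing \eqref{e.identif.Dsqrt} and invoking Lemma~\ref{l.unique.jordan} shows that $D_{\sqrt h}(a)$ is symmetric, so $D_{\sqrt h}(a)\cdot x\t z = z \cdot xD_{\sqrt h}(a)$; then the IBP \eqref{e.gibp} applied with $F = xD_{\sqrt h}(a)$, followed by a cyclic trace rearrangement, yields
\begin{equation*}
\E\la D_{\sqrt h}(a)\cdot x\t z\ra = \E\la xD_{\sqrt h}(a)\sqrt h\cdot (x-x')\ra,
\end{equation*}
and \eqref{e.Nishi} replaces $x'$ by $\bar x$ on the right. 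The $x$-contribution becomes $\sqrt h D_{\sqrt h}(a)\cdot \E\la x\t x\ra$, and because $x\t x$ is symmetric \eqref{e.sym.easy} collapses it to $\tfrac{1}{2}a\cdot \E\la x\t x\ra$.

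The delicate piece is $\E\la xD_{\sqrt h}(a)\sqrt h\cdot \bar x\ra = \sqrt h D_{\sqrt h}(a)\cdot \E\la \bar x\t x\ra$: the matrix $\bar x\t x$ is not a priori symmetric, so \eqref{e.sym.easy} does not apply directly. The key observation is that a further Nishimori exchange together with the conditional independence of two replicas under $\la\cdot\ra$ gives
\begin{equation*}
\E\la \bar x\t x\ra = \E\la x'\t x\ra = \E\Ll[\la x\ra\t\la x\ra\Rr] \in S^K_+,
\end{equation*}
which \emph{is} symmetric (and in fact positive semidefinite). Consequently \eqref{e.sym.easy} applies to this rewritten quantity and yields $\tfrac{1}{2}a\cdot \E\la x\t\bar x\ra$. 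Collecting the three contributions to $\nabla \bar F_N \cdot a$, the $x\t x$ pieces cancel, and I arrive at $\nabla \bar F_N = \frac{1}{2N}\E[\la x\ra\t\la x\ra]$. Being $S^K_+$-valued, this gradient triggers Lemma~\ref{l.grad}(2) and hence the monotonicity assertion. Finally $2|\nabla \bar F_N|^2 = \frac{1}{2N^2}|\E\la x\t\bar x\ra|^2$, and subtracting from $\partial_t\bar F_N$ produces $\frac{1}{2N^2}\bigl(\E\la|x\t\bar x|^2\ra - |\E\la x\t\bar x\ra|^2\bigr)$, which I recognize as the announced $\frac{1}{2N^2}\E\la |x\t\bar x - \E\la x\t\bar x\ra|^2\ra$ upon expanding the square.

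The step I expect to be the main obstacle is the symmetrization for the $\bar x$-term above: a direct use of \eqref{e.sym.easy} is blocked because $\bar x\t x$ is not symmetric, and the clean identity is only recovered by the second, less obvious Nishimori application that exposes the hidden symmetry $\E\la \bar x\t x\ra = \E[\la x\ra\t\la x\ra]$. The time-derivative manipulation and the final algebraic reassembly are routine.
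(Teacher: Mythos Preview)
Your proposal is correct and follows essentially the same route as the paper: compute $\partial_t \bar F_N$ via \eqref{e.gibp4}, compute $a\cdot\nabla \bar F_N$ by applying \eqref{e.gibp} to the $z$-term and then invoking \eqref{e.sym.easy} once the relevant matrix has been recognized as symmetric through the Nishimori identity $\E\la x\t\bar x\ra = \E[\la x\ra\t\la x\ra]$, and conclude monotonicity from Lemma~\ref{l.grad}. The only cosmetic difference is that the paper treats $x\t(x-\bar x)$ as a single symmetric matrix rather than splitting it into $x\t x$ and $\bar x\t x$, and it takes the symmetry of $D_{\sqrt h}(a)$ for granted (as the differential of a map into $S^K$) rather than deriving it from Lemma~\ref{l.unique.jordan} as you do.
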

\begin{proof}
Starting with the derivative with respect to $t$, we have
\begin{equation}  
\label{e.formula.partialt}
\partial_t  F_N(t,{h}) = \frac 1 N\la \frac {1} {2\sqrt {t N}} \,  x \cdot W x +  \frac 1 N |x\t\bar x|^2 - \frac 1 {2N} |x\t x|^2 \ra.
\end{equation}
By \eqref{e.gibp4}, we deduce that 
\begin{equation}  
\label{e.identif.drt}
\partial_t \bar F_N(t,{h}) = \frac 1 {2N^2} \E \la |x\t \bar x|^2 \ra.
\end{equation}
We also have, for every $h \in S^K_{++}$ and $a \in S^K$,
\begin{equation}  
\label{e.grad.FN}
a \cdot \nabla F_N(t,h) = \frac 1 N \la  D_{\sqrt{h}}(a) \cdot x\t z + a \cdot x\t \bar x - \frac 1 2 a \cdot x\t x \ra.
\end{equation}
By \eqref{e.gibp}, we have 
\begin{align*}  %\label{e.}
\E \la   D_{\sqrt{h}}(a) \cdot x\t z \ra 
& =   \E \la D_{\sqrt{h}}(a)\cdot  \Ll(x\t (x-\bar x) \sqrt{h}\Rr) \ra.
\\
&  = 
  D_{\sqrt{h}}(a) \sqrt{h}\cdot \E \la x\t (x-\bar x)  \ra.
\end{align*}
Since $\E \la x\t (x-\bar x) \ra = \E \Ll[ \la x\t x \ra - \la x \ra\t \la x \ra \Rr] $ is a symmetric matrix, we can use \eqref{e.sym.easy} to infer that
\begin{equation*}  %\label{e.}
\E \la   D_{\sqrt{h}}(a) \cdot x\t z \ra  = \frac 1 2 \la a \cdot \Ll(x\t (x-\bar x) \Rr) \ra,
\end{equation*}
 and therefore
\begin{equation}
\label{e.a.grad.barFN}
a \cdot \nabla \bar F_N = \frac{1}{2N} \E \la a \cdot x\t \bar x \ra .
\end{equation}
Since 
\begin{equation*}  %\label{e.}
\E \la x\t \bar x \ra = \E \Ll[\la x \ra\t \la x \ra\Rr] \in S^K_+,
\end{equation*}
we obtain that 
\begin{equation}
\label{e.identif.grad}
\nabla \bar F_N = \frac 1{2N} \E \la x\t \bar x \ra,
\end{equation}
and, by Lemma~\ref{l.grad}, that the mapping $h \mapsto \bar F_N(t,h)$ is nondecreasing. 
%To see the latter point clearly, we may write, for every $a \in S^K_+$,
%\begin{equation*}  %\label{e.}
%a \cdot \nabla \bar F_N = \frac{1}{2N} \E  \Ll[ \la x \ra a \cdot \la x \ra \Rr]   = \frac{1}{2N} \E \Ll[ |\la x \ra \sqrt{a}|^2 \Rr] \ge 0.
%\end{equation*}
%
%We may write this in coordinates as
%\begin{equation*}  %\label{e.}
%\nabla \bar F_N(t,h)\cdot a = \frac{1}{2N} \E\la\sum_{i = 1}^N \sum_{{1 \le k,l \le K}} x_{ik} a_{kl}  \bar x_{il} \ra  ,
%\end{equation*}
%or directly as
Combining~\eqref{e.identif.drt} and \eqref{e.identif.grad} yields \eqref{e.first.approx.hj}.
%\jc{
%One may rightfully be surprised to see the nonsymmetric matrix $x\t \bar x$ appear in the expression above. We can obtain more symmetric formulas if we are willing to accept the appearance of square matrices of size $N$ rather than $K$. Indeed, we can also write the identity~\eqref{e.identif.drt} in the form
%\begin{equation*}  %\label{e.}
%\dr_t \bar F_N = \frac 1 {2N^2} \E \la \tr\Ll(\bar x\t xx\t \bar x \Rr)\ra = \frac 1 {2N^2}\E \Ll[ \Ll| \la xx\t \ra \Rr|^2 \Rr],
%\end{equation*}
%so that
%\begin{equation*}  %\label{e.}
%\partial_t \bar F_N - 2 |\nabla \bar F_N|^2  = \frac 1 {2N^2}\Ll(\E \Ll[ \Ll| \la xx\t \ra \Rr|^2 \Rr] - \Ll| \E \Ll[ \la x \ra\t \la x \ra \Rr]  \Rr|^2 \Rr).
%\end{equation*}
%}
\end{proof}

We are now ready to complete the proof of Proposition~\ref{p.approx.hj}.

\begin{proof}[Proof of Proposition~\ref{p.approx.hj}]
In view of Lemma~\ref{l.first.approx.hj}, we aim to show that 
\begin{multline}
\label{e.second.approx.hj}
\frac 1 {2N^2} \E \la \Ll| x\t \bar x - \E\la x\t\bar x \ra \Rr|^2 \ra 
\\
\le  C \kappa(h) N^{ - \frac 1 4 } \Ll( \Delta \bar F_N + C \Ll| h^{-1} \Rr|  \Rr) ^\frac 1 4 + C \E \Ll[ \Ll| \nabla F_N - \nabla \bar F_N \Rr| ^2 \Rr].
\end{multline}
We decompose the proof of this fact into five steps.

\smallskip

\emph{Step 1.} 
For every $a \in S^K$, we denote
\begin{equation*}  %\label{e.}
H_N'(a,h,x) :=  D_{\sqrt{h}}(a) \cdot x\t z + a \cdot x\t \bar x - \frac 1 2 a \cdot x\t x.
\end{equation*}
In this step, we show that, for every $a \in S^K$ and $h \in S^K_{++}$,
\begin{multline}
\label{e.control.fluctH}
 \E \la \big(  H_N'(a,h,x) - \E \la H_N'(a,h,x) \ra \big) ^2 \ra 
\le N a \cdot \nabla \Ll( a \cdot \nabla \bar F_N(t,h) \Rr)
\\
 + N^2 \, \E \Ll[ \Ll( a \cdot \nabla F_N(t,h) - a \cdot \nabla \bar F_N(t,h) \Rr) ^2 \Rr] + C N |a|^2 \, \Ll|h^{-1}\Rr| .
\end{multline}
In the expression on the right side, the quantity $\nabla \Ll( a \cdot \nabla \bar F_N(t,h) \Rr)$ is the gradient of the mapping $h \mapsto a\cdot \nabla \bar F_N(t,h)$, evaluated at $h$. In particular, $\nabla  \Ll( a \cdot \nabla \bar F_N(t,h) \Rr) \in S^K$. To show \eqref{e.control.fluctH}, we start from the variance decomposition
\begin{multline*}  %\label{e.}
 \E \la \big(  H_N'(a,h,x) - \E \la H_N'(a,h,x) \ra \big) ^2 \ra 
\\
= \E \la \Ll( H_N'(a,h,x) - \la H_N'(a,h,x) \ra\Rr)^2 \ra + \E \Ll[\Ll( \la H_N'(a,h,x) \ra - \E \la H_N'(a,h,x) \ra\Rr)^2 \Rr] 
.
\end{multline*}
By \eqref{e.grad.FN}, we have
\begin{equation*}  %\label{e.}
\E \Ll[\Ll( \la H_N'(a,h,x) \ra - \E \la H_N'(a,h,x) \ra\Rr)^2 \Rr] = N^2 \, \E \Ll[ \Ll( a\cdot \nabla F_N(t,h) - a\cdot \nabla \bar F_N(t,h) \Rr) ^2 \Rr].
\end{equation*}
For every $h \in S^K_{++}$ and $a,b \in S^K$, we write
\begin{equation*}  %\label{e.}
D^2_{\sqrt{h}}(a,b) := \lim_{\ep \to 0} \ep^{-1} \Ll( D_{\sqrt{h+\ep b}}(a) - D_{\sqrt{h}}(a) \Rr) ,
\end{equation*}
so that
\begin{multline}
\label{e.partialh2F}
a \cdot \nabla \Ll( a \cdot \nabla F_N(t,h) \Rr) = 
\\
 \frac{1}{N} \Ll(\la \Ll(H_N'(a,{h},x)\Rr)^2 \ra -\la H_N'(a,{h},x) \ra^2 \Rr) + \frac {1}{N} \la  D^2_{\sqrt{h}}(a,a) \cdot x\t z  \ra .
\end{multline}
Differentiating the identity \eqref{e.identif.Dsqrt}, we find that
\begin{equation}  
\label{e.relation.D2sqrt}
2\Ll( D_{\sqrt{h}}(a) \Rr) ^2 + \sqrt{h} D^2_{\sqrt{h}}(a,a) + D^2_{\sqrt{h}}(a,a) \sqrt{h} = 0.
\end{equation}
By Lemma~\ref{l.gaussian}, we also have that
\begin{equation*}  %\label{e.}
\E \la  D^2_{\sqrt{h}}(a,a) \cdot x\t z  \ra = D^2_{\sqrt{h}}(a,a) \sqrt{h} \cdot\E \la   x\t (x-x')  \ra.
\end{equation*}
Combining the two previous displays with the fact that the matrix $\E \la   x\t (x-x')  \ra$ is symmetric, we obtain that 
\begin{multline}
\label{e.partialh2barF}
a \cdot \nabla \Ll( a \cdot \nabla \bar F_N(t,h) \Rr) = 
\\
 \frac{1}{N} \E \la \Ll( H_N'(a,h,x) - \la H_N'(a,h,x) \ra\Rr)^2 \ra  - \frac {1}{N} \E\la  \Ll(D_{\sqrt{h}}(a)\Rr)^2 \cdot x\t (x-x')  \ra .
\end{multline}
By \eqref{e.norm.Dsqrt}, this completes the proof of \eqref{e.control.fluctH}.

\smallskip

\emph{Step 2.} We now aim to show that the variance of~$x\t \bar x$ is controlled by a finite sum over $a$ of variances of $H_N'(a,h,x)$ (or equivalently, by the supremum over $|a| \le 1$ of these variances, since $a \mapsto H_N'(a,h,x)$ is linear). In this step, we show that there exists a constant $C < \infty$ such that for every $a \in S^K$,
\begin{multline}
\label{e.relat.fluct}
  \E \la \Ll( a \cdot x\t \bar x - \E\la a \cdot x\t \bar x \ra \Rr)^2 \ra  
 \\
 \le  4 \E \la \big(  H_N'(a,h,x) - \E \la H_N'(a,h,x) \ra \big) ^2 \ra +  C |a|^2 \kappa(h)  \,\msf{Skew},
\end{multline}
where $\msf{Skew}$ is a quantity measuring the skewness of the matrix $x\t \bar x$:
\begin{equation}
\label{e.def.skew}
\msf{Skew} := N \Ll(\E \la \Ll| x\t \bar x - \bar x\t x \Rr|^2 \ra\Rr)^\frac 1 2.
\end{equation}
Since
\begin{equation*}  %\label{e.}
 \E\la a \cdot x\t \bar x \ra = 4 \E \la H_N'(a,h,x) \ra ,
\end{equation*}
it suffices to show that
\begin{equation}  
\label{e.relat.fluct.simpl}
 \E \la \Ll( a \cdot x\t \bar x \Rr)^2 \ra  \le  4 \E \la  H_N'(a,h,x) ^2 \ra + C  |a|^2 \kappa(h)  \,\msf{Skew}.
\end{equation}
By Lemmas~\ref{l.gaussian} and \ref{l.symmetrize} and \eqref{e.norm.Dsqrt}, we have
\begin{align*}  %\label{e.}
& \E \la \Ll(D_{\sqrt h}(a) \cdot x\t z\Rr)^2 \ra 
\\
& \qquad 
=
\E \la \Ll(D_{\sqrt h}(a) \sqrt{h} \cdot x\t (x-x')\Rr)\Ll(D_{\sqrt h}(a) \cdot x\t z\Rr) \ra 
+ 
\E \la \Ll|D_{\sqrt h}(a) x\Rr|^2 \ra
\\
& \qquad 
=
\E \la \Ll(D_{\sqrt h}(a) \sqrt{h} \cdot x\t (x-x')\Rr)\Ll(D_{\sqrt h}(a) \sqrt{h} \cdot x\t (x + x' - 2 \bar x)\Rr) \ra 
+ 
\E \la \Ll|D_{\sqrt h}(a) x\Rr|^2 \ra
\\
& \qquad 
\ge 
\frac 1 4\E \la \Ll(a\cdot x\t (x-x')\Rr)\Ll(a\cdot x\t (x + x' - 2 \bar x)\Rr) \ra 
%+ 
%\E \la \Ll|D_{\sqrt h}(a) x\Rr|^2 \ra 
- C  |a|^2 \kappa(h)  \,\msf{Skew}.
\end{align*}
The first term on the right side of the previous display can be rewritten as
\begin{equation*}  %\label{e.}
\frac 1 4\E \la \Ll(a\cdot x\t x\Rr)^2 \ra 
- \frac 1 2\E \la \Ll(a\cdot x\t x\Rr)\Ll(a\cdot x\t \bar x\Rr) \ra 
- \frac 1 4 \E \la (a \cdot x\t \bar x)^2 \ra
+ \frac 1 2 \E \la \Ll(a\cdot x\t \bar x\Rr)\Ll(a\cdot x\t x'\Rr) \ra .
\end{equation*}
Appealing again to Lemmas~\ref{l.gaussian} and \ref{l.symmetrize}, we can also write
\begin{align*}  %\label{e.}
&  \E \la 2\Ll(D_{\sqrt h}(a) \cdot x\t z\Rr) \Ll( a \cdot x\t \bar x - \frac 1 2 a \cdot x\t x \Rr) \ra
\\
& 
\qquad =  \E \la 2\Ll(D_{\sqrt h}(a)\sqrt{h} \cdot x\t (x-x')\Rr) \Ll( a \cdot x\t \bar x - \frac 1 2 a \cdot x\t x \Rr) \ra
\\
& 
\qquad \ge   \E \la \Ll(a \cdot x\t (x-x')\Rr) \Ll( a \cdot x\t \bar x - \frac 1 2 a \cdot x\t x \Rr) \ra - C  |a|^2 \sqrt{\kappa(h)}  \, \msf{Skew},
\end{align*}
and the first term on the right side is equal to
\begin{equation*}  %\label{e.}
-\frac 1 2 \E \la \Ll( a\cdot x\t x \Rr)^2 \ra  + \frac 3 2 \E \la \Ll(a\cdot x\t x\Rr)\Ll(a\cdot x\t \bar x\Rr) \ra  -  \E \la \Ll(a\cdot x\t \bar x\Rr)\Ll(a\cdot x\t x'\Rr) \ra .
\end{equation*}
Finally,
\begin{equation*}  %\label{e.}
\E \la \Ll( a \cdot x\t \bar x - \frac 1 2 a \cdot x\t x \Rr)^2 \ra
\\
= \frac 1 4 \E \la \Ll( a\cdot x\t x \Rr)^2 \ra - \E \la \Ll(a\cdot x\t x\Rr)\Ll(a\cdot x\t \bar x\Rr) \ra + \E \la \Ll( a \cdot x\t \bar x \Rr) ^2 \ra .
\end{equation*}
Summing the previous displays, we obtain that
\begin{equation}  
\label{e.identif.H2}
 \E \la  H_N'(a,h,x) ^2 \ra 
 \ge \frac 3 4 \E \la \Ll( a \cdot x\t \bar x \Rr) ^2 \ra - \frac 1 2  \E \la \Ll(a\cdot x\t \bar x\Rr)\Ll(a\cdot x\t x'\Rr) \ra - C |a|^2 \kappa(h) \,\msf{Skew}.
\end{equation}
By the Cauchy-Schwarz inequality,
\begin{equation*}  %\label{e.}
\Ll| \E \la \Ll(a\cdot x\t \bar x\Rr)\Ll(a\cdot x\t x'\Rr) \ra \Rr|  \le  \E \la \Ll( a \cdot x\t \bar x \Rr) ^2 \ra.
\end{equation*}
Combining the two previous displays yields \eqref{e.relat.fluct.simpl}, and therefore also~\eqref{e.relat.fluct}.

\smallskip

\emph{Step 3.} There remains to control the skew-symmetric part of $x\t \bar x$. Following the approach of \cite{bar19}, we decompose the argument into two steps. In this step, we find a convenient expression for the second derivative of $\bar F_N$, namely,
\begin{multline}  
\label{e.expr.drh2}
 a\cdot \nabla \Ll( a \cdot \nabla \bar F_N \Rr)
\\
 = \frac 1 {2N} \Ll( \E \la \Ll( a \cdot x\t \bar x \Rr) ^2\ra - 2 \E \la \Ll( a \cdot x\t \bar x \Rr) \Ll( a\cdot x\t x' \Rr) \ra + \E \Ll[ \la a\cdot x\t x'\ra^2 \Rr]  \Rr) .
\end{multline}
Differentiating \eqref{e.a.grad.barFN} gives
\begin{equation}  
\label{e.magic.is.coming}
a\cdot \nabla \Ll( a \cdot \nabla \bar F_N \Rr) = \frac 1 {2N} \E \la \Ll(a \cdot x\t \bar x \Rr) \Ll( H_N'(a,h,x) - H_N'(a,h,x') \Rr) \ra.
\end{equation}
Moreover, by Lemma~\ref{l.gaussian},
\begin{align*}  %\label{e.}
\E \la \Ll( a \cdot x\t \bar x \Rr) D_{\sqrt{h}}(a) \cdot x\t z \ra
= 
\E \la \Ll( a \cdot x\t \bar x \Rr) D_{\sqrt{h}}(a) \sqrt{h}\cdot x\t (x-x') \ra,
\end{align*}
while
\begin{equation*}  %\label{e.}
\E \la \Ll( a \cdot x\t \bar x \Rr) D_{\sqrt{h}}(a) \cdot (x')\t z \ra
= 
\E \la \Ll( a \cdot x\t \bar x \Rr) D_{\sqrt{h}}(a) \sqrt{h}\cdot (x')\t (x+x'-2x'') \ra.
\end{equation*}
Observe that
\begin{equation*}  %\label{e.}
x\t x - \Ll( x\t x' + (x')\t x \Rr) - (x')\t x' + (x')\t x'' + (x'')\t x'
\end{equation*}
is a symmetric matrix. Using the symmetry between the replicas $x'$ and $x''$, \eqref{e.sym.easy}, and then that $a$ is a symmetric matrix, we thus obtain that
\begin{align*}  %\label{e.}
& 
\E \la \Ll( a \cdot x\t \bar x \Rr) D_{\sqrt{h}}(a) \cdot \Ll(x-x'\Rr)\t z \ra
\\
& \qquad = 
\frac 1 2 \E \la \Ll( a \cdot x\t \bar x \Rr)  a\cdot \Ll( x\t x -  x\t x' - (x')\t x  - (x')\t x' + (x')\t x'' + (x'')\t x'\Rr)   \ra.
\\
& \qquad = \frac 1 2 \E \la  \Ll( a\cdot x\t \bar x \Rr)  a\cdot\Ll( x\t  x - (x')\t x'-2x\t x' + 2 (x')\t x''\Rr)   \ra .
\end{align*}
Combining this with \eqref{e.magic.is.coming} yields
\begin{align*}  %\label{e.}
 a\cdot \nabla \Ll( a \cdot \nabla \bar F_N \Rr)
& = 
\frac 1 {2N} \E \la  \Ll( a\cdot x\t \bar x \Rr)  a\cdot\Ll( -x\t x'  +  (x')\t x''+  x\t \bar x  - (x')\t \bar x \Rr)   \ra, 
%\\
%& = \frac 1 {2N} \Ll( \E \la \Ll( a \cdot x\t \bar x \Rr) ^2\ra - 2 \E \la \Ll( a \cdot x\t \bar x \Rr) \Ll( a\cdot x\t x' \Rr) \ra + \E \Ll[ \la a\cdot x\t x'\ra^2 \Rr]  \Rr) ,
\end{align*}
which is \eqref{e.expr.drh2}.

\smallskip

\emph{Step 4.} In this step, we show that
\begin{equation}
\label{e.skew.est}
\msf{Skew} \le C N^\frac 7 4  \Big(\sup_{\substack{v \in \R^{K\times 1} \\ |v| \le 1}}  (vv\t) \cdot \nabla \Ll( (vv\t)\cdot \nabla \bar F_N \Rr)  \Big)^\frac 1 4.
\end{equation}
For each pair of vectors $v,w \in \R^{K\times 1}$, we look for an upper bound on the quantity
\begin{equation}  
\label{e.needed.upper}
\E \la \Ll( vw\t \cdot \Ll(x\t \bar x - \la  x\t x' \ra\Rr) \Rr)  ^2 \ra = \E \la \Ll( vw\t \cdot \Ll(x\t x' - \la  x\t x' \ra\Rr) \Rr)  ^2 \ra.
\end{equation}
Since
\begin{equation*}  %\label{e.}
\mathrm{Span} \Ll(\Ll\{v w\t \ : \ v,w \in \R^{K\times 1} \Rr\} \Rr) = \R^{K\times K},
\end{equation*}
and since the matrix $\la x\t x'\ra = \la x\ra\t \la x\ra$ is symmetric, knowing that the quantity in~\eqref{e.needed.upper} is small would indeed tell us that the antisymmetric part of $x\t \bar x$ concentrates around~$0$. We will make use of the fact that for every $x,x' \in \R^{N\times K}$ and $v,w \in \R^{K\times 1}$,
\begin{equation}
\label{e.unpolarize}
\Ll( v w\t \cdot x\t x' \Rr) ^2 = \Ll( xvv\t x\t \Rr) \cdot \Ll( x'ww\t(x')\t \Rr) .
\end{equation}
This follows from
\begin{multline*}  %\label{e.}
\Ll( v w\t \cdot x\t x' \Rr) ^2 
 = \Ll( xv \cdot x'w \Rr) ^2
 = \Ll( v\t x\t  x'w \Rr) ^2
 \\
 = \Ll( v\t x\t  x'w \Rr)\cdot \Ll( v\t x\t  x'w \Rr)
 = \Ll( xvv\t x\t \Rr) \cdot \Ll( x'ww\t(x')\t \Rr) .
\end{multline*}
In particular,
\begin{align*}
\la \Ll( vw\t \cdot x\t x' \Rr) ^2 \ra
& = \la xvv\t x\t \ra \cdot \la xww\t x\t \ra,
\end{align*}
and
\begin{equation*}  %\label{e.}
\Ll( vw\t \cdot \la x\t x'\ra \Rr) ^2 = \Ll( \la x \ra vv\t \la x \ra\t \Rr) \cdot \Ll( \la x \ra w w\t \la x\ra^t \Rr) .
\end{equation*}
We can thus rewrite the quantity in \eqref{e.needed.upper} as
\begin{align*}  %\label{e.}
& \E \Ll[ \la \Ll( vw\t \cdot x\t x' \Rr) ^2 \ra - \Ll( vw\t \cdot \la x\t x'\ra \Rr) ^2 \Rr] 
\\
& \quad =  \E \Ll[\la xvv\t x\t \ra \cdot \la xww\t x\t \ra
- \Ll( \la x \ra vv\t \la x \ra\t \Rr) \cdot \Ll( \la x \ra w w\t \la x\ra^t \Rr) \Rr] 
\\
& \quad =  \E \Ll[ \Ll( \la xvv\t x\t\ra - \la x \ra vv\t \la x \ra\t\Rr)\cdot  \la xww\t x\t \ra 
+
 \la x \ra vv\t \la x \ra\t\cdot 
 \Ll(\la xww\t x\t \ra -  \la x \ra w w\t \la x\ra^t \Rr) \Rr] .
\end{align*}
By the triangle and the Cauchy-Schwarz inequalities, we thus see that
\begin{align*}  %\label{e.}
\msf{Skew} 
& =  N \Ll(\E \la \Ll| x\t \bar x - \bar x\t x \Rr|^2 \ra\Rr)^\frac 1 2
\\
& \le 2 N \Ll(\E \la \Ll| x\t \bar x - \la x\t x'\ra \Rr|^2 \ra\Rr)^\frac 1 2
\\
& \le C N \Big( \sup_{\substack{v,w \in \R^{K\times 1} \\ |v|,|w| \le 1}}\E \la \Ll( vw\t \cdot \Ll(x\t \bar x - \la  x\t x' \ra\Rr) \Rr)  ^2 \ra \Big)^\frac 1 2
\\
& \le C N^\frac 3 2\Big( \sup_{\substack{v \in \R^{K\times 1} \\ |v| \le 1}} \E\Ll[ \Ll| \la xvv\t x\t\ra - \la x \ra vv\t \la x \ra\t \Rr| ^2 \Rr]\Big)^\frac 1 4.
\end{align*}
(Recall that the constant $C$ is allowed to depend on $K$. Instead of the supremum over~$v$ and $w$, it may be more natural to write first a sum over $v,w \in \mcl V$ with $\mcl V$ a finite set such that $\{vw\t \ : \ v,w \in \mcl V\}$ spans $\R^{K\times K}$.)
Appealing again to \eqref{e.unpolarize}, we see that
\begin{equation*}  %\label{e.}
\E \Ll[\Ll| \la xvv\t x\t \ra \Rr|^2 \Rr]= \E \la \Ll( vv\t \cdot x\t x' \Rr) ^2 \ra,
\end{equation*}
\begin{equation*}  %\label{e.}
\E \Ll[\Ll| \la x \ra vv\t \la x \ra\t \Rr|^2 \Rr]= \E \Ll[\la vv\t \cdot x\t x' \ra ^2 \Rr].
\end{equation*}
A minor variant of \eqref{e.unpolarize} gives that 
\begin{equation*}  %\label{e.}
\Ll( vv\t \cdot x\t \bar x \Rr) \Ll( vv\t \cdot x\t x' \Rr) = \Ll( xvv\t x\t  \Rr) \cdot \Ll( \bar x v v\t (x')\t \Rr) ,
\end{equation*}
and thus
\begin{align*}  %\label{e.}
\E \Ll[ \la xvv\t x\t\ra \cdot \la x \ra vv\t \la x \ra\t  \Rr] 
& = \E \la \Ll( xvv\t x\t  \Rr) \cdot \Ll( \bar x v v\t (x')\t \Rr) \ra
\\
& = \E \la \Ll(vv\t\cdot x\t \bar x\Rr)\Ll(vv\t\cdot x\t x'\Rr) \ra.
\end{align*}
Combining these identities yields that
\begin{multline*}  %\label{e.}
\E\Ll[ \Ll| \la xvv\t x\t\ra - \la x \ra vv\t \la x \ra\t \Rr| ^2 \Rr] \\
=
\E \la \Ll( vv\t \cdot x\t x' \Rr) ^2 \ra - 2  \E \la \Ll(vv\t\cdot x\t \bar x\Rr)\Ll(vv\t\cdot x\t x'\Rr) \ra + \E \Ll[\la vv\t \cdot x\t x' \ra ^2 \Rr].
\end{multline*}
Using also \eqref{e.expr.drh2} completes the proof of \eqref{e.skew.est}.

\smallskip

\emph{Step 5.} We combine the results of the previous steps and complete the proof. Notice first that, since $\msf {Skew} \le C N^2$,
\begin{align}  
\notag
\E \la \Ll| x\t \bar x - \E\la x\t\bar x \ra \Rr|^2 \ra 
& \le C \sup_{\substack{a \in S^K \\ |a|\le 1}} \E \la \Ll( a \cdot x\t \bar x - \E\la a \cdot x\t \bar x \ra \Rr)^2 \ra + C N^{-2} \Ll( \msf{Skew} \Rr) ^2
\\
\label{e.split.sym.skew}
& \le C \sup_{\substack{a \in S^K \\ |a|\le 1}} \E \la \Ll( a \cdot x\t \bar x - \E\la a \cdot x\t \bar x \ra \Rr)^2 \ra +  C \msf{Skew}.
\end{align}
From \eqref{e.partialh2barF} and \eqref{e.norm.Dsqrt}, we see that, for every $a \in S^K$,
\begin{equation}  
\label{e.lowerbound.dr2barF}
a \cdot \nabla \Ll( a \cdot \nabla \bar F_N(t,h) \Rr) \ge -C \Ll|h^{-1}\Rr| \, |a|^2.
\end{equation}
Writing the Laplacian explicitly as a sum of derivatives in an orthogonal basis that contains a vector colinear to $a \in S^K$, we deduce that
\begin{equation*}  %\label{e.}
a \cdot \nabla \Ll( a \cdot \nabla \bar F_N(t,h) \Rr) \le |a|^2 \Ll( \Delta \bar F_N(t,h) + C \Ll|h^{-1}\Rr| \Rr) .
\end{equation*}
In particular, we can replace $a \cdot \nabla \Ll( a \cdot \nabla \bar F_N(t,h) \Rr)$ by $|a|^2 \Delta \bar F_N(t,h)$ on the right side of \eqref{e.control.fluctH}, up to a modification of the constant $C$, and rewrite \eqref{e.skew.est} as
\begin{equation*}  %\label{e.}
\msf{Skew} \le C N^\frac 7 4 \Ll( \Delta \bar F_N + C \Ll| h^{-1} \Rr|  \Rr)^\frac 1 4.
\end{equation*}
Combining these estimates with \eqref{e.relat.fluct} and \eqref{e.split.sym.skew}, we obtain that
\begin{multline*}  %\label{e.}
\E \la \Ll|x\t \bar x - \E \la x\t \bar x \ra \Rr|^2 \ra \le C \Ll( N\Delta_N \bar F_N + N^2 \E \Ll[ \Ll| \nabla F_N - \nabla \bar F_N \Rr| ^2 \Rr] + C N \Ll|h^{-1}\Rr|  \Rr) 
\\
+ C \kappa(h) N^\frac 7 4 \Ll( \Delta \bar F_N + C \Ll| h^{-1} \Rr|  \Rr) ^\frac 1 4.
\end{multline*}
Since the left side of this inequality is bounded, we can simplify this into \eqref{e.second.approx.hj}, thereby completing the proof.
\end{proof}
\begin{remark}  
\label{r.convex}
Contrary to the rank-one case, I do not know whether the mapping $h \mapsto \bar F_N(t,h)$ is convex. However, this mapping does satisfy a partial convexity property in the direction of positive semidefinite matrices. (We will not make use of this fact.) To make this point explicit, notice first that if $y,y',y''$ are three i.i.d.\ random vectors under $\la \cdot \ra$, then 
\begin{equation}
\label{e.positivity}
\la (y\cdot y')^2 \ra - 2 \la (y\cdot y')(y\cdot y'') \ra + \la y\cdot y' \ra^2 \ge 0.
\end{equation}
Indeed, this follows from the fact that the left side of \eqref{e.positivity} can be rewritten as
\begin{equation*}  %\label{e.}
\Ll| \la yy\t \ra - \la y \ra \la y \ra \t  \Rr| ^2.
\end{equation*}
Recalling \eqref{e.expr.drh2}, we see that, for every $a \in S^K_+$,
\begin{align*}  %\label{e.}
& a \cdot \nabla \Ll( a \cdot \nabla \bar F_N \Rr) 
\\
& \qquad = \frac 1 {2N} \E \Ll[\la \Ll( a \cdot x\t \bar x \Rr) ^2\ra - 2 \la \Ll( a \cdot x\t \bar x \Rr) \Ll( a\cdot x\t x' \Rr) \ra +  \la a\cdot x\t x'\ra^2 \Rr]  
\\
& \qquad = 
\frac 1 {2N} \E \Ll[\la \Ll( x\sqrt{a} \cdot \bar x\sqrt{a} \Rr) ^2\ra - 2 \la \Ll( x\sqrt{a} \cdot \bar x\sqrt{a} \Rr) \Ll( x\sqrt{a}\cdot x'\sqrt{a} \Rr) \ra +  \la x\sqrt{a}\cdot x'\sqrt{a}\ra^2 \Rr] .
\end{align*}
Viewing $x\sqrt{a}$ as a vector with $NK$ entries and applying \eqref{e.positivity}, we obtain that this quantity is nonnegative. However, since this reasoning requires that we take the square root of $a$, it only applies to the situation when $a \in S^K_+$. (By symmetry, the case when $-a \in S^K_+$ is of course also covered.)
\end{remark}

\subsection{Derivative and concentration estimates}

We next record simple derivative and concentration estimates. We denote
\begin{equation*}  %\label{e.}
\|W\|_{\ell^2 \to \ell^2} := \sup \{|W x|  \ : \ x \in \R^{N\times 1}, \ |x|\le 1\}.
\end{equation*}

\begin{lemma}[Derivative estimates]
\label{l.gradient}
There exists a constant $C < \infty$ such that the following estimates hold uniformly over $\R_+ \times S^K_+$:
\begin{equation}  
\label{e.gradest.barF}
|\partial_t \bar F_N| + |\nabla \bar F_N|  
%+ N^{-1} |\partial_h^2 \bar F_N| + N^{-2} |\partial_h^3 \bar F_N| 
\le C,
\end{equation}
\begin{equation}  
\label{e.gradest.F}
|\partial_t F_N| \le C + \frac{C\|W\|_{\ell^2 \to \ell^2}}{\sqrt{Nt}}, \quad \text{ and } \quad |\nabla F_N| \le C + \frac{C|z| \, \Ll| h^{-1} \Rr|^\frac 1 2}{\sqrt{N}}.
\end{equation}
Moreover,  for every $a \in S^K$,
\begin{equation}  
\label{e.conv.F}
a \cdot \nabla \Ll( a \cdot \nabla \bar F_N \Rr)  \ge - C|a|^2 \, \Ll| h^{-1} \Rr|, \quad \text{ and } \quad a \cdot \nabla \Ll( a \cdot \nabla F_N \Rr)  \ge - \frac{C|a|^2 \, |z| \, \Ll| h^{-1} \Rr|^\frac 3 2}{\sqrt{N}}.
\end{equation}
\end{lemma}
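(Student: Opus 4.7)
The plan is to derive all five estimates directly from the explicit first- and second-derivative formulas already computed in (or implicit in) the proof of Lemma~\ref{l.first.approx.hj}, together with the standing assumption that $P$ has bounded support. The latter means that on the support of $P_N$ we have a deterministic bound $|x|^2 \le CN$; similarly $|\bar x|^2 \le CN$ almost surely. From this every factor of the form $|x\t x|$, $|x\t \bar x|$, or $|x|\,|\bar x|$ under the Gibbs bracket is controlled by $CN$.

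For the first-order bounds on $\bar F_N$, I would just quote the identities $\partial_t \bar F_N = \frac{1}{2N^2}\E\la |x\t \bar x|^2\ra$ from \eqref{e.identif.drt} and $\nabla \bar F_N = \frac{1}{2N}\E\la x\t \bar x \ra$ from \eqref{e.identif.grad} and insert the support bounds. For $\partial_t F_N$ I would use \eqref{e.formula.partialt}; the only non-deterministic piece is $\frac{1}{2N\sqrt{tN}}\la x\cdot W x\ra$, estimated by $|x\cdot W x| \le \|W\|_{\ell^2\to\ell^2}\,|x|^2 \le CN\|W\|_{\ell^2\to\ell^2}$ (writing $x\cdot Wx = \sum_k x_k\t W x_k$ over columns). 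For $\nabla F_N$ I would use \eqref{e.grad.FN} and bound $|D_{\sqrt h}(a)\cdot x\t z| \le |D_{\sqrt h}(a)|\,|x|\,|z| \le C|a|\,|h^{-1}|^{1/2}\sqrt{N}\,|z|$ by appealing to \eqref{e.norm.Dsqrt}, yielding after division by $N$ the advertised bound $C|a| + C|a|\,|h^{-1}|^{1/2}|z|/\sqrt N$, and taking supremum over $|a|\le 1$.

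For the second-order (semiconvexity) estimates, I would use the identity \eqref{e.partialh2barF}, in which the first term is a non-negative variance and the second is $-\frac{1}{N}\E\la (D_{\sqrt h}(a))^2\cdot x\t(x-x')\ra$; Cauchy--Schwarz together with $|(D_{\sqrt h}(a))^2|\le |D_{\sqrt h}(a)|^2 \le C|a|^2|h^{-1}|$ (from \eqref{e.norm.Dsqrt}) and $|x\t(x-x')| \le CN$ turn this remainder into $-C|a|^2|h^{-1}|$, which is exactly the stated lower bound on $a\cdot \nabla(a\cdot \nabla \bar F_N)$. For the corresponding bound on $F_N$, I would use \eqref{e.partialh2F}: the ``variance'' piece is non-negative, so only the term $\frac{1}{N}\la D^2_{\sqrt h}(a,a)\cdot x\t z\ra$ requires control.

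The one non-routine step is the bound on $|D^2_{\sqrt h}(a,a)|$. Here I would differentiate \eqref{e.identif.Dsqrt} a second time to recover \eqref{e.relation.D2sqrt}, namely
\begin{equation*}
\sqrt h\,D^2_{\sqrt h}(a,a) + D^2_{\sqrt h}(a,a)\sqrt h = -2\bigl(D_{\sqrt h}(a)\bigr)^2,
\end{equation*}
then reapply the diagonalization argument of Lemma~\ref{l.unique.jordan}/\eqref{e.norm.Dsqrt} to this Jordan equation to get $|D^2_{\sqrt h}(a,a)| \le C|D_{\sqrt h}(a)|^2\,|h^{-1}|^{1/2} \le C|a|^2\,|h^{-1}|^{3/2}$. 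Inserting this and $|x\t z| \le |x|\,|z| \le C\sqrt N\,|z|$ into the offending term gives the bound $C|a|^2\,|h^{-1}|^{3/2}|z|/\sqrt N$, which is the stated lower bound on $a\cdot \nabla(a\cdot \nabla F_N)$. I expect this double application of the Jordan-inverse estimate to be the only place where any real work (as opposed to bookkeeping) is needed; everything else is direct substitution.
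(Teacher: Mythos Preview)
Your proposal is correct and follows essentially the same route as the paper's own proof: the first-order bounds are read off from \eqref{e.identif.drt}, \eqref{e.identif.grad}, \eqref{e.formula.partialt}, \eqref{e.grad.FN} together with \eqref{e.norm.Dsqrt}, and the semiconvexity bounds come from \eqref{e.partialh2barF} and \eqref{e.partialh2F}, with the only substantive step being the estimate $|D^2_{\sqrt h}(a,a)| \le C|a|^2|h^{-1}|^{3/2}$ obtained by diagonalizing in \eqref{e.relation.D2sqrt} and applying \eqref{e.norm.Dsqrt}. You have identified exactly this as the non-routine step, and your argument for it matches the paper's.
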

\begin{proof}
The estimates in  \eqref{e.gradest.barF} and \eqref{e.gradest.F} follow from \eqref{e.identif.drt}, \eqref{e.identif.grad}, \eqref{e.formula.partialt}, \eqref{e.grad.FN} and \eqref{e.norm.Dsqrt}. (Recall that the constants are allowed to depend on $K$.)
The first part of \eqref{e.conv.F} is a consequence of \eqref{e.partialh2barF} and \eqref{e.norm.Dsqrt}. To obtain the second part of \eqref{e.conv.F}, we see from \eqref{e.partialh2F} that it suffices to establish that 
\begin{equation}  
\label{e.norm.D2sqrt}
\Ll|D^2_{\sqrt{h}}(a,a) \Rr| \le C |a|^2 \, \Ll| h^{-1} \Rr| ^\frac 3 2.
\end{equation}
Up to a change of basis, we may assume that the matrix $h$ is diagonal, with eigenvalues $0 < \lambda_1 \le \cdots \le \lambda_K$. Denoting by $(\msf d_{kl})_{1 \le k,l \le K}$ and $(\msf d_{kl}')_{1 \le k,l \le K}$ the entries of the matrices $D_{\sqrt{h}}(a)$ and $D^2_{\sqrt{h}}(a)$ respectively, we see from \eqref{e.relation.D2sqrt} that for every $k,l \in \{1,\ldots,K\}$,
\begin{equation*}  %\label{e.}
\Ll(\sqrt{\lambda_k} + \sqrt{\lambda_l} \Rr)\msf d'_{kl} = -2 \sum_{m = 1}^K \msf d_{km} \msf d_{ml} .
\end{equation*}
We thus obtain \eqref{e.norm.D2sqrt} using \eqref{e.norm.Dsqrt}.
\end{proof}

We now turn to a concentration estimate. We simply state an $L^2$ estimate with a suboptimal exponent, since this is sufficient for our purposes, but point out that it is classical to improve upon this. 
\begin{lemma}[Concentration of free energy]
There exists $\al > 0$ and, for every compact set $V \subset \R_+\times S^K_+$, a constant $C < \infty$ such that for every $N \in \N$,
\label{l.concentration}
\begin{equation*}  %\label{e.}
\E \Ll[ \Ll\| F_N - \bar F_N \Rr\|_{L^\infty(V)}^2  \Rr] \le C N^{-\alpha}.
\end{equation*}
\end{lemma}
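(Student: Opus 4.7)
The plan is to first establish pointwise $L^2$ concentration $\E|F_N(t,h) - \bar F_N(t,h)|^2 \le C/N$, then upgrade to the $L^\infty(V)$ bound via a Lipschitz interpolation on a high-probability event combined with a grid covering of an interior shift of $V$. For the pointwise bound, I would decompose the variance across the three independent sources of randomness $W$, $z$ and $\bar x$, and estimate each separately. Gaussian Poincar\'e applied to $W$ (conditionally on $z,\bar x$) gives a conditional variance at most $\E|\nabla_W F_N|^2$: direct computation from \eqref{e.formula.partialt}, using the boundedness of $\supp P$, yields $|\nabla_W F_N|^2 \le C t/N$; Poincar\'e in $z$ gives $|\nabla_z F_N|^2 \le C|h|/N$ from \eqref{e.grad.FN}. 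For $\bar x$, Efron--Stein applies: resampling one row $\bar x_i$ changes $\tfrac{t}{N}|x\t\bar x|^2 + h\cdot x\t\bar x$ by $O(1)$ (since $|x\t\bar x| = O(N)$ and the change from one row is $O(1)$), hence $F_N$ by $O(1/N)$, producing total variance $O(N \cdot N^{-2}) = O(1/N)$. All constants are uniform on $V$ compact.

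The obstacle to a naive Lipschitz + covering argument is that \eqref{e.gradest.F} shows $|\nabla F_N|$ and $|\dr_t F_N|$ blow up as $h \to \dr S^K_+$ or $t \to 0$. To bypass this, introduce the shifted compact $\td V_\de := \{(t+\de, h+\de \mathrm{I}_K) : (t,h) \in V\} \subset [\de,\infty) \times S^K_{+\de}$ and the high-probability event $E_N := \{\|W\|_{\ell^2 \to \ell^2} + |z| \le C_0\sqrt N\}$, with $\P(E_N^c) \le e^{-cN}$ by standard Gaussian spectral and $\chi^2$ concentration. Integrating the directional derivative $\dr_t F_N + \mathrm{I}_K \cdot \nabla F_N$ along the segment $s \mapsto (t+s, h+s\mathrm{I}_K)$, using \eqref{e.gradest.F} together with $|D_{\sqrt{h+s\mathrm{I}_K}}(\mathrm{I}_K)| \le C/\sqrt s$ from \eqref{e.norm.Dsqrt}, one obtains on $E_N$ the ``boundary-crossing'' estimate
\begin{equation*}
|F_N(t+\de, h+\de \mathrm{I}_K) - F_N(t,h)| \le \int_0^\de C(1 + s^{-1/2})\,\d s \le C\sqrt\de .
\end{equation*}
A matching estimate for $\bar F_N$ follows from \eqref{e.gradest.barF}, so on $E_N$
\begin{equation*}
\|F_N - \bar F_N\|_{L^\infty(V)} \le \|F_N - \bar F_N\|_{L^\infty(\td V_\de)} + C\sqrt\de .
\end{equation*}
On $\td V_\de$, Lemma~\ref{l.gradient} and $E_N$ deliver a deterministic Lipschitz constant $L_\de = C\de^{-1/2}$. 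Covering $\td V_\de$ by $\ep$-balls of cardinality $M \le C\ep^{-(1+\KK)}$ and invoking the pointwise bound at grid points then gives
\begin{equation*}
\E\bigl[\|F_N - \bar F_N\|_{L^\infty(\td V_\de)}^2 \indc_{E_N}\bigr] \le \frac{CM}{N} + C L_\de^2 \ep^2 \le \frac{C}{\ep^{1+\KK} N} + \frac{C\ep^2}{\de} .
\end{equation*}

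Choosing $\de = \ep = N^{-1/(2+\KK)}$ balances the three terms $\de$, $\ep^{-(1+\KK)}/N$ and $\ep^2/\de$, producing the overall rate $N^{-\al}$ with $\al = 1/(2+\KK)$. Off $E_N$, the crude bound $|F_N(t,h)| + |\bar F_N(t,h)| \le C(1 + (\|W\| + |z|)/\sqrt N)$ (read off from $|H_N|/N$) combined with $\P(E_N^c) \le e^{-cN}$ and Cauchy--Schwarz produces an exponentially small contribution. The main technical obstacle is the boundary step of the second paragraph: a direct Lipschitz interpolation fails at $\dr S^K_+$ and $t = 0$, and the argument hinges on exploiting the integrable $s^{-1/2}$ singularity of $D_{\sqrt{h+s\mathrm{I}_K}}(\mathrm{I}_K)$ in the symmetric direction $\mathrm{I}_K$ to bridge from $V$ to its interior shift $\td V_\de$ with only a $\sqrt\de$ loss.
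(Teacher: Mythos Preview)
Your proof is correct and follows the same overall three-step scheme as the paper: pointwise $L^2$ concentration via Efron--Stein and Gaussian Poincar\'e, a regularity estimate on $F_N-\bar F_N$, and an $\ep$-net covering. The difference lies only in the middle step. The paper simply asserts that $F_N-\bar F_N$ is $\tfrac12$-H\"older on all of $V$ with a random seminorm of finite moments; this holds because the map $h\mapsto\sqrt h$ is $\tfrac12$-H\"older on $S^K_+$ (and the $t^{-1/2}$ singularity in $\partial_t F_N$ integrates to $\tfrac12$-H\"older in $t$), so one can bound $|F_N(t,h')-F_N(t,h)|$ directly via $|\sqrt{h'}-\sqrt h|\le C|h'-h|^{1/2}$ without ever leaving the boundary. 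This lets the paper pass straight to the covering step, balancing only the two terms $C\sqrt\ep$ and $C\ep^{-1-\KK}N^{-1}$.

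Your interior-shift construction $\td V_\de$, the good event $E_N$, and the integral along $s\mapsto(t+s,h+s\mathrm I_K)$ are all correct, and the $\sqrt\de$ boundary-crossing loss you derive is precisely a manifestation of the same $\tfrac12$-H\"older continuity, proved in one special direction. So your route works and is self-contained (it avoids invoking the operator-theoretic fact $|\sqrt{h'}-\sqrt h|\le C|h'-h|^{1/2}$), at the cost of two extra parameters $(\de,E_N)$ to optimize and a slightly weaker explicit exponent. The paper's shortcut is cleaner here; your argument would be the natural one if the global H\"older regularity of the initial data were not available.
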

\begin{proof}
The proof is essentially the same as that of \cite[Lemma~3.2]{HJinfer}, so we only briefly sketch the argument. First, using the Efron-Stein and the Gaussian Poincar\'e inequalities, we verify that for every $M \ge 1$, there exists $C < \infty$ such that for every $t \le M$ and $|h| \le M$,
\begin{equation*}  %\label{e.}
\E \Ll[ \Ll( F_N - \bar F_N \Rr)^2(t,h) \Rr] \le C N^{-1}. 
\end{equation*}
We next use \eqref{e.gradest.barF} and \eqref{e.gradest.F} to assert that $F_N - \bar F_N$ is $\frac 1 2$-H\"older continuous, with a random H\"older seminorm that has finite moments of every order. We then write, for every $\ep \in (0,1]$,
\begin{equation*}  %\label{e.}
 \E \Ll[ \sup_{t \le M, |h| \le M} \Ll( F_N - \bar F_N \Rr)^2(t,h)  \Rr] 
\le C \sqrt{\ep} + \E \Ll[ \sup_{(t,h) \in A_\ep} \Ll( F_N - \bar F_N \Rr)^2(t,h)  \Rr],
\end{equation*}
where $A_\ep$ is an $\ep$-net of the set $\{(t,h) \in \R_+ \times S^K_+ \ : \ t \le M \text{ and } |h| \le M\}$. We can choose $A_\ep$ in such a way that $|A_\ep| \le C \ep^{-1-\KK}$, and thus, by a union bound,
\begin{equation*}  %\label{e.}
 \E \Ll[ \sup_{t \le M, |h| \le M} \Ll( F_N - \bar F_N \Rr)^2(t,h)  \Rr] \le C \sqrt{\ep} + C \ep^{-1-\KK} \, N^{-1}. 
\end{equation*}
Optimizing over $\ep$ leads to the desired result. 
\end{proof}

%
%
%
%%%%%%%%%%%%%%%%%%%%%%%%%%%%
%%%%%%%%%%%%%%%%%%%%%%%%%%%%
%
%
%

\section{Convergence to weak solution}
\label{s.convergence}

We now show how Proposition~\ref{p.approx.hj}, together with the concentration estimate in Lemma~\ref{l.concentration}, implies Theorem~\ref{t.hj}. The argument is an adaptation of the proof of uniqueness of weak solutions of \eqref{e.hj}, see Proposition~\ref{p.uniqueness}. One minor simplification comes from the fact that we can assert the local semiconvexity property uniformly in a neighborhood of the region $t = 0$. Indeed, the local semiconvexity of $\bar F_N$ given by \eqref{e.conv.F} does not degenerate as $t \to 0$, and the corresponding property for the limit solution $f$ is provided by \eqref{e.local.semiconv.fth}. 

\begin{proof}[Proof of Theorem~\ref{t.hj}]
We decompose the proof into four steps. 

\smallskip

\emph{Step 1.} We set up the argument, find an approximate equation for the difference between $\bar F_N$ and the candidate limit, and state elementary bounds, paralleling Steps~1 and 2 of the proof of Proposition~\ref{p.uniqueness}. Denote by $f$ the weak solution of \eqref{e.hj} with initial condition $f(0,\cdot) = \psi = \bar F_1(0,\cdot)$. We set 
\begin{equation*}  %\label{e.}
w_N := \bar F_N - f, \qquad \text{and} \qquad \Err := \dr_t \bar F_N - 2 |\nabla \bar F_N|^2. 
\end{equation*}
The following holds almost everywhere in $\R_+ \times S^K_+$:
\begin{align*}  %\label{e.}
\dr_t w_N & = 2 |\nabla \bar F_N|^2 - 2 |\nabla f|^2 + \Err
\\
& = 2 \Ll( \nabla \bar F_N + \nabla f \Rr) \cdot \nabla w_N + \Err.
\end{align*}
We denote 
\begin{equation*}  %\label{e.}
\b_N := 2 \Ll( \nabla \bar F_N + \nabla f \Rr).
\end{equation*}
Let $\phi \in C^\infty(\R)$ be a nonnegative smooth function satisfying $|\phi'| \le 1$ and $\phi(0) = 0$, and set $v_N := \phi(w_N)$. We have
\begin{equation*}  %\label{e.}
\dr_t v_N - \b_N \cdot \nabla v_N = \phi'(w_N) \Err \qquad \text{a.e. in } \R_+ \times S^K_+.
\end{equation*}
We define $\zeta_\ep$ as in \eqref{e.def.zetaeps}, $f_\ep$ as in \eqref{e.def.f.eps}, and 
\begin{equation*}  %\label{e.}
\b_{N,\ep} := 2 \Ll( \nabla \bar F_N + \nabla f_\ep \Rr) ,
\end{equation*}
so that
\begin{equation*}  %\label{e.}
\dr_t v_N - \b_{N,\ep} \cdot \nabla v_N = (\b_N - \b_{N,\ep})\cdot \nabla v_N + \phi'(w_N) \Err \qquad \text{a.e. in } \R_+ \times S^K_+.
\end{equation*}
By \eqref{e.conv.F} with $N = 1$ and \eqref{e.local.semiconv.fth}, there exists $C < \infty$ such that for every $\de \in (0,1]$ and $\ep \in [0,\frac \de 2]$, 
\begin{equation*}
%\label{e.delta.fep}
\Delta f_\ep + C \de^{-1} \ge 0 \qquad \text{on } \R_+ \times S^K_{+\de}.
\end{equation*}
By \eqref{e.conv.F}, there exists $C < \infty$ such that for every $N \in \N$ and $\de \in (0,1]$, 
\begin{equation*}  %\label{e.}
\Delta \bar F_N + C \de^{-1} \ge 0 \qquad \text{on } \R_+ \times S^K_{+\de}.
\end{equation*}
We thus obtain that for every $N \in \N$, $\de \in (0,1]$ and $\ep \in [0,\frac \de 2]$, 
\begin{equation}
\label{e.divbN}
\nabla \cdot  \b_{N,\ep} + C \de^{-1} \ge 0 \qquad \text{on } \R_+ \times S^K_{+\de}.
\end{equation}

\smallskip

\emph{Step 2.} We essentially reproduce the arguments in Step 3 of the proof of Proposition~\ref{p.uniqueness}, temporarily leaving aside the new error term $\Err$. We denote
\begin{equation*}  %\label{e.}
R := 1 + 2 \|\nabla f\|_{L^\infty(\R_+\times S^K_+)} + 2 \sup_{N \in \N} \Ll( \|\nabla \bar F_N\|_{L^\infty(\R_+\times S^K_+)} \Rr),
\end{equation*}
which is finite by \eqref{e.gradest.barF}. We fix $T \ge 1$ and define, for every $t \in [0, \frac T 2]$, the sets $B_\de(t)$, $\dr_+ B_\de(t)$, and $\dr_0 B_\de(t)$  displayed in \eqref{e.def.Bde}-\eqref{e.def.bBde}, as well as
\begin{equation*}  %\label{e.}
J_{\de,N}(t) := \int_{B_\de(t)} v_N(t,\cdot). 
\end{equation*}
The function $J_{\de,N}$ is Lipschitz, and for almost every $t \in [0,\frac T 2]$,
\begin{align*}  %\label{e.}
& \dr_t J_{\de,N}(t) 
\\
& \quad = \int_{B_\de(t)} \dr_t v_N(t,\cdot) - R \int_{\dr_+ B_\de(t)} v_N(t,\cdot)
\\
& \quad = 
\int_{B_\de(t)} \Ll(\b_{N,\ep} \cdot \nabla v_N + (\b_N - \b_{N,\ep})\cdot \nabla v_N + \phi'(w_N) \Err\Rr)(t,\cdot) - R \int_{\dr_+ B_\de(t)} v_N(t,\cdot).
\end{align*}
By \eqref{e.ae.lim} and the dominated convergence theorem, we have
\begin{equation*}  %\label{e.}
\lim_{\ep \to 0} \int_{B_\de(t)}  \Ll((\b_N - \b_{N,\ep})\cdot \nabla v_N\Rr)(t,\cdot) = 0. 
\end{equation*}
Integrating by parts, we see that
\begin{equation*}  %\label{e.}
\int_{B_\de(t)} \Ll(\b_{N,\ep} \cdot \nabla v_N\Rr)(t,\cdot) = -\int_{B_\de(t)} \Ll(v_N \nabla \cdot \b_{N,\ep}\Rr)(t,\cdot) + \int_{\dr{B_\de(t)}} \Ll(v_N \b_N\cdot \n\Rr)(t,\cdot),
\end{equation*}
where $\n$ is the unit outer normal to $B_\de(t)$. By \eqref{e.divbN}, we have
\begin{equation*}  %\label{e.}
-\int_{B_\de(t)} \Ll(v_N \nabla \cdot \b_{N,\ep}\Rr)(t,\cdot) \le C \de^{-1} J_{\de,N}(t). 
\end{equation*}
We decompose the boundary integral into
\begin{equation*}  %\label{e.}
\int_{\dr{B_\de(t)}} \Ll(v_N \b_N\cdot \n\Rr)(t,\cdot) = \int_{\dr_+{B_\de(t)}} \Ll(v_N \b_N\cdot \n\Rr)(t,\cdot) + \int_{\dr_0{B_\de(t)}} \Ll(v_N \b_N\cdot \n\Rr)(t,\cdot).
\end{equation*}
By the definition of $R$, we have 
\begin{equation*}  %\label{e.}
 \int_{\dr_+{B_\de(t)}} \Ll(v_N \b_N\cdot \n\Rr)(t,\cdot) \le R  \int_{\dr_+{B_\de(t)}} v_N (t,\cdot).
\end{equation*}
Since both $\bar F_N$ and $f$ are nondecreasing in $h$, we infer from \eqref{e.equiv.pos} that
\begin{equation*}  %\label{e.}
\int_{\dr_0{B_\de(t)}} \Ll(v_N \b_N\cdot \n\Rr)(t,\cdot) \le 0.
\end{equation*}

\smallskip

\emph{Step 3.} 
There remains to estimate the contribution of the error term $\Err$. By Proposition~\ref{p.approx.hj}, and since $|\phi'| \le 1$, we have 
\begin{multline}  
\label{e.two.int}
\int_{B_\de(t)} \Ll( \phi'(w_N) \Err\Rr)(t,\cdot) 
\le C \de^{-1} N^{-\frac 1 4}\int_{B_\de(t)}   \Ll( \Delta \bar F_N + C \de^{-1}  \Rr) ^\frac 1 4(t,\cdot) 
\\
+C \int_{B_\de(t)} \E \Ll[ \Ll| \nabla F_N - \nabla \bar F_N \Rr| ^2 \Rr](t,\cdot) ,
\end{multline}
where we allow the multiplicative constant to depend also on $R$ and $T$. We estimate each of these two integrals in turn. By Jensen's inequality,
\begin{equation*}  %\label{e.}
\int_{B_\de(t)}   \Ll( \Delta \bar F_N + C \de^{-1}  \Rr) ^\frac 1 4(t,\cdot) \le \Ll(C \de^{-1} + \int_{B_\de(t)}   \Delta \bar F_N (t,\cdot) \Rr)^\frac 1 4,
\end{equation*}
and moreover, by integration by parts and \eqref{e.gradest.barF},
\begin{equation*}  %\label{e.}
\int_{B_\de(t)}   \Delta \bar F_N (t,\cdot) \le C.
\end{equation*}
Turning to the second integral on the right side of \eqref{e.two.int}, we introduce the notation $V := \{(t,h) \ : \ t \le T, |h| \le RT\}$, and integrate by parts and use \eqref{e.gradest.barF} again to get
\begin{align*}  %\label{e.}
& \int_{B_\de(t)} |\nabla F_N - \nabla \bar F_N|^2(t,\cdot) 
\\
& \quad = \int_{\dr B_\de(t)} \Ll((F_N - \bar F_N)\nabla(F_N - \bar F_N)\cdot \n\Rr)(t,\cdot) 
 -\int_{B_\de(t)} \Ll((F_N - \bar F_N)\Delta(F_N - \bar F_N)\Rr)(t,\cdot) 
\\
& \quad \le \|F_N - \bar F_N\|_{L^\infty(V)} \Ll( C + \int_{B_\de(t)} \Ll| \Delta (F_N - \bar F_N) \Rr| (t,\cdot)\Rr).
\end{align*}
We can then write
\begin{multline*}  %\label{e.}
\int_{B_\de(t)} \Ll| \Delta (F_N - \bar F_N) \Rr| (t,\cdot) 
\\
\le C \de^{-\frac 3 2} \Ll( 1 + \frac{|z|}{\sqrt{N}}  \Rr) + \int_{B_\de(t)} \Ll| \Delta (F_N - \bar F_N) + C \de^{-\frac 3 2} \Ll( 1 + \frac{|z|}{\sqrt{N}}  \Rr)\Rr| (t,\cdot) .
\end{multline*}
We next observe that for $C < \infty$ sufficiently large, the quantity between absolute values above is nonnegative, by \eqref{e.conv.F}. Integrating by parts and using \eqref{e.gradest.F}, we obtain that
\begin{equation*}  %\label{e.}
\int_{B_\de(t)} \Ll| \Delta (F_N - \bar F_N) \Rr| (t,\cdot)  \le C \de^{-\frac 3 2} \Ll( 1 + \frac{|z|}{\sqrt{N}}  \Rr).
\end{equation*}
Summarizing, and using the Cauchy-Schwarz inequality, we conclude that
\begin{equation*}  %\label{e.}
\int_{B_\de(t)} \E \Ll[ \Ll| \nabla F_N - \nabla \bar F_N \Rr| ^2 \Rr](t,\cdot) \le C \de^{-\frac 3 2} \E  \Ll[  \|F_N - \bar F_N\|_{L^\infty(V)}^2 \Rr] ^\frac 1 2 ,
\end{equation*}
and thus, by Lemma~\ref{l.concentration}, that there exists an exponent $\al \in (0,\frac 1 4]$ such that
\begin{equation*}  %\label{e.}
\int_{B_\de(t)} \E \Ll[ \Ll| \nabla F_N - \nabla \bar F_N \Rr| ^2 \Rr](t,\cdot) \le C \de^{-\frac 3 2} N^{-\al}.
\end{equation*}

\smallskip

\emph{Step 4.} We conclude the proof. Combining the results of the two previous steps, 
we obtain that almost everywhere in $[0,\frac T 2]$, we have
\begin{equation*}  %\label{e.}
\dr_t J_{N,\de} \le C \de^{-1} J_{\de,N} + C \de^{-\frac 3 2} N^{-\al}, 
\end{equation*}
that is,
\begin{equation*}  %\label{e.}
\dr_t \Ll(  \exp \Ll(-C\de^{-1} t\Rr) J_{\de,N} \Rr) \le C \exp \Ll(-C\de^{-1} t\Rr) \de^{-\frac 3 2} N^{-\al}.
\end{equation*}
Since $\phi(0) = 0$ and $w_N(0,\cdot) = 0$, this implies that for every $t \in [0,\frac T 2]$,
\begin{equation*}  %\label{e.}
\int_{B_\de(t)} \phi(\bar F_N - f)(t,\cdot) \le C  \exp(C\de^{-1}) \de^{-\frac 3 2}N^{-\alpha}.
\end{equation*}
(Recall that we allow the constant $C$ to depend on $T$.) We may as well absorb the term $\de^{-\frac 3 2}$ into the exponential. Since this estimate is valid uniformly over nonnegative $\phi \in C^\infty(\R)$ satisfying $\phi(0) = 0$ and $|\phi'| \le 1$, we deduce that for every $t \in [0,\frac T 2]$,
\begin{equation*}  %\label{e.}
\int_{B_\de(t)} \Ll| \bar F_N - f \Rr| (t,\cdot) \le C \exp ( C \de^{-1}) N^{-\al} .
\end{equation*}
Since the functions $\bar F_N$ and $f$ are locally bounded, uniformly over $N$, and the measure of the set $B_0(t) \setminus B_\de(t)$ is bounded by $C\de$, this implies that for every $t \in [0,\frac T 2]$,
\begin{equation*}  %\label{e.}
\int_{B_0(t)}\Ll| \bar F_N - f \Rr| (t,\cdot) \le C \de + C \exp ( C \de^{-1}) N^{-\al} .
\end{equation*}
We select $\de := C \log^{-1} N$, for a sufficiently large constant $C$, so that for every $t \in [0,\frac T 2]$,
\begin{equation*}  %\label{e.}
\int_{B_0(t)}\Ll| \bar F_N - f \Rr| (t,\cdot) \le \frac{C}{\log N}. 
\end{equation*}
This completes the proof of Theorem~\ref{t.hj}.
\end{proof}

%
%
%
%%%%%%%%%%%%%%%%%%%%%%%%%%%%
%%%%%%%%%%%%%%%%%%%%%%%%%%%%
%
%
%

\appendix

\section{Computation of the conditional law}
\label{s.nishi}

We denote
\begin{equation}  
\label{e.newdef.Y}
\mcl Y = (Y,Y') = \Ll( \sqrt{\frac{t}{N}} \, \bar x \, \bar x\t + W, \  \bar x\sqrt{h} + z\Rr).
\end{equation}
In this appendix, we verify that the conditional law of $\bar x$ given $\mcl Y$ is given by
\begin{equation}
\label{e.def.cond.law}
\frac{e^{H_N(t,h,x)} \, \d P_N(x)}{\int_{\R^{N\times K}} e^{H_N(t,h,x')} \, \d P_N(x')}.
\end{equation}
(See also~\eqref{e.def.cond.measure} for an equivalent statement.) 
For every bounded measurable functions $f$ and $g$, we can write~$\E \Ll[ f(\bar x) g(\mcl Y) \Rr]$, up to a normalization constant that depends neither on $f$ nor on $g$, as
\begin{equation*}
 \int f(x) g \Ll(  \sqrt{\frac{t}{N}} \, x \, x\t + W, \   x\sqrt{h} + z \Rr) \, \exp \Ll( - \frac {|W|^2} 2  - \frac{|z|^2}{2}  \Rr) \, \d W \, \d z \, \d P_N(x),
\end{equation*}
with the shorthand notation $\d W := \prod_{i,j} \d W_{ij}$ and $\d z := \prod_{i,k} \d z_{ik}$. A change of variables allows to rewrite the expression above as
\begin{equation*} 
\int f(x) g(\mcl Y) 
 \exp \Ll( -\frac 1 2 \Ll| Y - \sqrt{\frac t N} x\, x\t \Rr|^2 - \frac 1 2 \Ll|Y' -  x\sqrt{h}\Rr|^2 \Rr) \, \d \mcl Y \, \d P_N(x). 
\end{equation*}
Denoting the exponential factor above by $\mcl E(x,\mcl Y)$, we thus obtain that the law of $\mcl Y$ is the law with density given, up to a normalization constant, by
\begin{equation*}  %\label{e.}
\bar {\mcl E}(\mcl Y) := \int \mcl E(x,\mcl Y) \, \d P_N(x),
\end{equation*}
and that, denoting by $c$ the normalization constant,
\begin{equation}  
\label{e.joint.law}
\E \Ll[ f(\bar x) g(\mcl Y) \Rr] = c \int f(x) \frac{\mcl E(x,\mcl Y)}{\bar {\mcl E}(\mcl Y)} \, \d P_N(x)  \, g(\mcl Y) \, \bar {\mcl E}(\mcl Y) \, \d \mcl Y.
\end{equation}
The conditional law of $\bar x$ given $\mcl Y$ is thus the probability measure given by
\begin{equation*}  %\label{e.}
\frac{\mcl E(x,\mcl Y)}{\bar{\mcl E}(\mcl Y)} \, \d P_N(x),
\end{equation*}
and this quantity can indeed be rewritten in the form of \eqref{e.newdef.Y}.

\smallskip

\noindent \textbf{Acknowledgements} I would like to warmly thank Jean Barbier for stimulating discussions and for telling me about the results of \cite{bar19} prior to their publication. I was partially supported by the ANR grants LSD (ANR-15-CE40-0020-03) and Malin (ANR-16-CE93-0003) and by a grant from the NYU--PSL Global Alliance.

\small
\bibliographystyle{abbrv}
\bibliography{HJinfer}

\end{document}